\newtheorem{theorem}{Theorem}[section]
\newtheorem{lemma}[theorem]{Lemma}
\newtheorem{corollary}[theorem]{Corollary}
\newtheorem{claim}[theorem]{Claim}
\newtheorem{proposition}[theorem]{Proposition}
\newtheorem{example}[theorem]{Example}
\newtheorem{question}[theorem]{Question}
\theoremstyle{definition}
\newtheorem{remark}[theorem]{Remark}
\numberwithin{equation}{section}
\newcommand{\N}{\mathbb{N}}
\begin{document}
\date{}
\title{Equicontinuity of maps on dendrites}         % Enter your title between curly braces
\author{Javier Camargo   \and Michael Rinc\'on \and Carlos Uzc\'ategui}
\address{Escuela de Matem\'aticas, Facultad de Ciencias, Universidad Industrial de
	Santander, Ciudad Universitaria, Carrera 27 Calle 9, Bucaramanga,
	Santander, A.A. 678, COLOMBIA.}
\email{jcamargo@saber.uis.edu.co}

\address{Escuela de Matem\'aticas, Facultad de Ciencias, Universidad Industrial de
	Santander, Ciudad Universitaria, Carrera 27 Calle 9, Bucaramanga,
	Santander, A.A. 678, COLOMBIA.}
\email{marinvil@uis.edu.co}

\address{Escuela de Matem\'aticas, Facultad de Ciencias, Universidad Industrial de
	Santander, Ciudad Universitaria, Carrera 27 Calle 9, Bucaramanga,
	Santander, A.A. 678, COLOMBIA. Centro Interdisciplinario de L\'ogica y \'Algebra, Facultad de Ciencias, Universidad de Los Andes, M\'erida, VENEZUELA.}
\email{cuzcatea@saber.uis.edu.co}

\subjclass[2010]{Primary: 54H20.  Secondary: 37B45}

\keywords{}

\begin{abstract}
Given a dendrite $X$ and a continuous map $f\colon X\to X$, we show  the following are equivalent: (i) $\omega_f$ is continuous and $\overline{\mathrm{Per}(f)}=\bigcap_{n\in\N}f^n(X)$; (ii) $\omega(x,f)=\Omega(x,f)$ for each $x\in X$;  and (iii) $f$ is equicontinuous. Furthermore, we present some examples illustrating our results.
\end{abstract}

\maketitle

\section{Introduction}

There has been a lot of interest in the study of dynamical system defined on dendrites \cite{Mai,Sun,Sun2,Sun3}.  A particular interesting issue is to determine when a continuous map $f\colon X\to X$ is equicontinuous, i.e. when the collection $\{f^n:\,n\in\N\}$ of iterates of $f$ is an equicontinuous family.  The paradigmatic examples are given by continuous maps $f\colon [0,1]\to [0,1]$. In this case, it is known that  $f$ is equicontinuous if, and only if, $\bigcap_{n\in\mathbb N} f^n[0,1]$ is equal to $\mathrm{Fix}(f^2)$, the collection of fixed points of $f^2$, and it is also equivalent to require that $\mathrm{Fix}(f^2)$ is connected (see \cite{Bruckner}).  In order to state other characterizations of equicontinuity, we need to introduce two crucial  concepts:
\begin{enumerate}
\item $\omega(x,f)$  is  the set of all points $y\in X$ such that there exists an increasing sequence $(n_i)_{i\in\N}$ in $\N$ satisfying $\lim_{i\to \infty}f^{n_i}(x)=y$.

\item $\Omega(x,f)$ is  the set of all points $y\in X$ such that there exist a sequence $(x_i)_{i\in\N}$ which converges to $x$ and an increasing sequence $(n_i)_{i\in\N}$ in $\N$ satisfying $\lim_{i\to \infty}f^{n_i}(x_i)=y$.
\end{enumerate}

Bruckner and Ceder \cite{Bruckner} incorporated into the study of  the dynamics of interval maps  the function $\omega_f\colon [0,1]\to2^{[0,1]}$ defined by $\omega_f(x)=\omega(x,f)$. They showed that $f$ is equicontinuous if, and only if, $\omega_f$ is continuous. These results have been partially extended to dendrites, that is, locally connected continua without simple closed curves \cite{Mai,Sun,Sun2,Sun3}. For instance,  Sun et al. \cite[Theorem 2.8]{Sun} showed that if $X$ is a dendrite such that it has less that $2^{\aleph_0}$ end points and  $f\colon X\to X$ is a continuous map, then $f$ is equicontinuous if, and only if, $\omega(x,f^n)=\Omega(x,f^n)$ for all $x\in X$ and $n\in \N$. Also, in \cite[Theorem 2.8]{Sun2}, it is shown that if $X$ is a dendrite with finite branch points and $f\colon X\to X$ is a continuous map, then equicontinuity is equivalent to $\omega(x,f)=\Omega(x,f)$ for each $x\in X$. The main result in the present paper is Theorem \ref{MainTheorem}, where we show the following: Let $X$ be a dendrite, $f\colon X\to X$ be a continuous map and $\mathrm{Per}(f)$ be the collection of periodic points. The following are equivalent:
	\begin{enumerate}
		\item $\omega_f$ is continuous and $\overline{\mathrm{Per}(f)}=\bigcap_{n\in\N}f^n(X)$;
		\item $\omega(x,f)=\Omega(x,f)$ for each $x\in X$;
		\item $f$ is equicontinuous.
	\end{enumerate}

We present some examples showing  the necessity of some conditions as in (1).  We also present an example showing this result is no valid for fans.

\section{Definitions and preliminaries}

Let $Z$ be a metric space, then given $A\subseteq Z$ and $\epsilon>0$, the open ball about $A$ of radius $\epsilon$ is denoted by $\mathcal{V}(A,\epsilon)$. The  interior, clousure, boundary and cardinality of $A$ are denoted by $A^{\circ}, \overline{A}, \mathrm{Bd}(A)$ and $|A|$, respectively. A \textit{map} is a continuous function.
Given a compact metric space $X$, we denote by $2^X$ the set of all nonempty closed subsets of $X$, topologized by the Hausdorff metric which is defined as follows: for $A,B\in 2^X$, we set
\begin{gather*}
    \mathcal{H}(A,B)=\inf\{\epsilon>0\,:\,\text{$A\subseteq\mathcal V(B,\epsilon)$ and $B\subseteq\mathcal V(A,\epsilon)$}\}.
\end{gather*}

A \text{continuum} is a nonempty, compact, connected metric space. We say that a continuum $X$ is a \textit{simple closed curve} if $X$ is homeomorphic to $S^1=\{z\in \mathbb{C} : |z|=1\}$. A continuum $X$ is a \textit{dendrite} provided that $X$ is locally connected and does not contain a simple closed curve. It is clear that dendrites are uniquely arcwise connected continua. So, if $X$ is a dendrite and $x,y\in X$, we denote by $[x,y]$ the unique arc joining $x$ and $y$; also $(x,y)$ will denote $[x,y]\setminus\{x,y\}$.

Let $X$ be a continuum. A point $p\in X$ is called an \textit{end point of} $X$ provided that whenever  $U$ is open and $p\in U$, there exists an open set $V\subseteq U$ such that $x\in V$ and $|\mathrm{Bd}(V)|=1$. We denote by $\mathrm{End}(X)$ the collection of end points of $X$. Furthermore, $p$ is a \textit{cut point of} $X$, if $X\setminus \{p\}$ is disconnected.

Let $(X,d)$ be a metric space. A map $f\colon X\to X$ is called \textit{equicontinuous} provided that for each $\epsilon >0$, there exists $\delta>0$ such that $d(f^n(x),f^{n}(y))<\epsilon$, for each $n\in\N$, whenever $d(x,y)<\delta$ with $x, y \in X$.

Given a map $f\colon X\to X$ where $X$ is a compact metric space and $x\in X$, the orbit under $f$ is the set $\mathcal O(x,f)=\{f^n(x)\,:\,n\in\mathbb N\}$. We say that $x$ is a \textit{periodic point} of $f$ provided $f^k(x)=x$ for some $k\in\N$. We denote the collection of periodic points of $f$ by $\mathrm{Per}(f)$. If $f(x)=x$, then we say that $x$ is a \textit{fixed point} of $f$; the set of all fixed point of $f$ is denoted by $\mathrm{Fix}(f)$.  Given a compact metric space $X$, a map $f\colon X\to X$ and $x\in X$. The following concepts will play an important role.

\begin{enumerate}
\item $\omega(x,f)$ is the set of all points $y\in X$ such that there exists an increasing sequence $(n_i)_{i\in\N}$ in $\N$ satisfying $\lim_{i\to \infty}f^{n_i}(x)=y$.
\item $\Omega(x,f)$ is the set of all points $y\in X$ such that there exist a sequence $(x_i)_{i\in\N}$ which converges to $x$, and an increasing sequence $(n_i)_{i\in\N}$ in $\N$ satisfying $\lim_{i\to \infty}f^{n_i}(x_i)=y$.
\end{enumerate}

 A point $x\in X$ is said to be \textit{recurrent} provided that $x\in\omega(x,f)$. The set of all recurrent points of $X$ will be denoted by $\mathrm{R}(f)$.

The following easy result is well known.

\begin{proposition}\label{prop01}
Let $X$ be a compact metric space, $x\in X$ and $f\colon X\to X$ be a map. If $\mathcal{I}$ is either $\omega(x,f)$ or $\Omega(x,f)$,  then $\mathcal{I}$ is a nonempty compact subset of $X$ and $f(\mathcal{I})=\mathcal{I}$.
\end{proposition}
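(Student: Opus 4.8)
The plan is to prove Proposition \ref{prop01} by handling the two sets in parallel where possible, and noting the one place the arguments diverge. First I would establish that $\ideal$ is nonempty and closed. For $\omega(x,f)$, the sequence $(f^n(x))_{n\in\N}$ lies in the compact space $X$, hence has a convergent subsequence, whose limit belongs to $\omega(x,f)$; for $\Omega(x,f)$ nonemptiness is immediate since $\omega(x,f)\subseteq\Omega(x,f)$ (take $x_i=x$ for all $i$). Closedness in both cases is a routine diagonal argument: if $y_k\to y$ with each $y_k\in\ideal$, pick for each $k$ an index $n_k$ (increasing in $k$, which is possible because the defining sequences $(n_i)$ are increasing and unbounded) and, in the $\Omega$ case, a point $x_k$ with $d(x_k,x)<1/k$, so that $d(f^{n_k}(x_k),y_k)<1/k$; then $f^{n_k}(x_k)\to y$ and $x_k\to x$, witnessing $y\in\Omega(x,f)$. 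The $\omega$ case is the same with $x_k$ replaced by $x$. Since $\ideal$ is a closed subset of the compact space $X$, it is compact.

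Next I would prove $f(\ideal)=\ideal$, which splits into the two inclusions. For $f(\ideal)\subseteq\ideal$: if $y\in\ideal$ is witnessed by $(n_i)$ (and $(x_i)$ in the $\Omega$ case) with $f^{n_i}(x_i)\to y$, then by continuity of $f$ we get $f^{n_i+1}(x_i)=f(f^{n_i}(x_i))\to f(y)$, and since $(n_i+1)$ is still increasing this shows $f(y)\in\ideal$ (with the same approximating sequence $(x_i)\to x$ in the $\Omega$ case). For the reverse inclusion $\ideal\subseteq f(\ideal)$: given $y\in\ideal$ witnessed by $(n_i)$ and $(z_i)$ (where $z_i=x$ in the $\omega$ case), consider the points $f^{n_i-1}(z_i)$; we may assume $n_i\geq 1$ for all $i$. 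By compactness of $X$, after passing to a subsequence we may assume $f^{n_i-1}(z_i)\to w$ for some $w\in X$. Then $w\in\ideal$: in the $\omega$ case this is clear since $(n_i-1)$ is increasing; in the $\Omega$ case the same approximating sequence $z_i\to x$ works. Finally, by continuity $f(w)=\lim f(f^{n_i-1}(z_i))=\lim f^{n_i}(z_i)=y$, so $y\in f(\ideal)$.

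I do not expect a serious obstacle here; this is a standard ``folklore'' fact. The only point requiring a little care is the passage to subsequences to extract the limit point $w$ in the proof of $\ideal\subseteq f(\ideal)$, and the bookkeeping to ensure the chosen index sequences remain strictly increasing after diagonalization in the closedness argument — but both are routine. One should also remember to dispose of the trivial edge case where infinitely many $n_i$ equal $0$ (relevant only in principle, since the sequences are increasing, so at most one $n_i$ can be $0$ and we may discard it). I would present the $\omega$ and $\Omega$ arguments simultaneously, writing ``let $z_i=x$ for all $i$ in the case $\ideal=\omega(x,f)$'' once at the start to avoid duplication.
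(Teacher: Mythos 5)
Your proof is correct and complete: nonemptiness and closedness via the diagonal extraction (with the pairing of $(x_k)$ and $(n_k)$ kept intact in the $\Omega$ case), $f(\ideal)\subseteq\ideal$ by shifting the exponents, and $\ideal\subseteq f(\ideal)$ by extracting a convergent subsequence of the points $f^{n_i-1}(z_i)$ are exactly the standard arguments, and the edge cases you flag (strict monotonicity after diagonalization, discarding a possible $n_i$ too small to shift down) are handled adequately. Note that the paper itself offers no proof of this proposition --- it is stated as a well-known fact --- so there is nothing to compare against; your write-up would serve as a valid proof of it.
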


Proposition \ref{prop01} says that the map $\omega_f\colon X\to 2^X$ defined by $\omega_f(x)=\omega(x,f)$, for $x\in X$, is well defined.  In \cite{Bruckner} was initiated the study of the continuity of  $\omega_f$ for functions defined on the unit interval.

We recall some results about dendrites that we shall use.

\begin{theorem}\cite[Theorem 10.7]{Nadler}\label{theo00}
A nondegenerate continuum $X$ is a dendrite if, and only if, each point of $X$ is either a cut point of $X$ or an end point of $X$.
\end{theorem}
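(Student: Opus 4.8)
Although this characterization is classical (it is cited above as \cite[Theorem 10.7]{Nadler}), here is the argument I would give.

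\emph{Necessity.} Let $X$ be a dendrite and $p\in X$. If $X\setminus\{p\}$ is disconnected then $p$ is a cut point, so I would assume $X\setminus\{p\}$ is connected and prove $p\in\mathrm{End}(X)$. I would first record two standard facts: (a) in any locally connected continuum, every component $C$ of $X\setminus\{q\}$ is open and $\overline{C}=C\cup\{q\}$, so $\Bd(C)=\{q\}$; (b) in a dendrite any interior point of an arc $[a,b]$ separates $a$ from $b$, since otherwise $a$ and $b$ could be joined by an arc missing that point, contradicting unique arcwise connectedness. Now fix $q\ne p$, a parametrization $\gamma\colon[0,1]\to X$ of $[p,q]$ with $\gamma(0)=p$, and for $s\in(0,1)$ put $r_s=\gamma(s)$ and let $V_s$ be the component of $X\setminus\{r_s\}$ containing $p$. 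By (a) each $V_s$ is an open neighborhood of $p$ with $\Bd(V_s)=\{r_s\}$, and by (b) $r_s$ separates $p$ from $r_{s'}$ whenever $s<s'$, so $V_s\subseteq V_{s'}$. The crucial point is $\bigcap_{s>0}V_s=\{p\}$: if some $y\ne p$ were in all $V_s$, then (using (b) again) $r_s\notin[p,y]$ for every $s$, which forces $[p,y]\cap[p,q]=\{p\}$ and hence puts $y$ and $q$ in different components of $X\setminus\{p\}$ --- a contradiction. Since the $V_s$ are connected, contain $p$, and shrink to $\{p\}$, it follows that $\diam(V_s)\to 0$ as $s\to0^{+}$ (otherwise, for some $\epsilon_0>0$, each $V_s$ with $s$ small would contain, by connectedness, a point at distance exactly $\epsilon_0$ from $p$, and a limit point of such points would lie in $\bigcap_{s>0}V_s$ yet differ from $p$). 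Hence $p$ has arbitrarily small neighborhoods with one-point boundary, i.e. $p\in\mathrm{End}(X)$.

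\emph{Sufficiency.} Now suppose every point of $X$ is a cut point or an end point; I must show $X$ is locally connected and contains no simple closed curve. \textbf{Establishing local connectedness is the main obstacle.} Here I would invoke the classical theory of cut points: for $a\ne b$ one shows that $\{a,b\}$ together with the set of points separating $a$ from $b$, ordered by declaring $x$ to precede $y$ when $x$ separates $a$ from $y$, is an arc joining $a$ to $b$ --- so $X$ is arcwise connected --- and then, given $p$ and an open $U\ni p$, one produces a cut point $r\in U$ whose ``$p$-side'' component of $X\setminus\{r\}$ is a connected neighborhood of $p$ contained in $U$, and intersects finitely many such components to obtain a connected open neighborhood of $p$ inside $U$. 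The delicate point is to rule out a failure of local connectedness at some $p$, which by boundary-bumping arguments would otherwise yield a point of $X$ that is neither a cut point nor an end point; this is where essentially all of the work lies.

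Granting local connectedness, I would finish by excluding simple closed curves. Suppose $S\subseteq X$ is one. No $z\in S$ can be an end point of $X$: otherwise $z$ would have arbitrarily small open neighborhoods $V$ in $X$ with $|\Bd(V)|=1$, so $V\cap S$ would be an arbitrarily small neighborhood of $z$ in $S$ with at most one boundary point relative to $S$; but in a simple closed curve the only such sets are $S$ itself and the complements of single points, which are not small. Hence every $z\in S$ is a cut point of $X$. For each such $z$ let $D_z$ be the component of $X\setminus\{z\}$ containing the connected set $S\setminus\{z\}$, and set $B_z=X\setminus\overline{D_z}$. By fact (a), $\overline{D_z}=D_z\cup\{z\}$, so $B_z$ is a nonempty open subset of $X$ disjoint from $S$; moreover, if $u\in B_z\cap B_{z'}$, then the component of $X\setminus\{z\}$ through $u$ misses $z'$ and the component of $X\setminus\{z'\}$ through $u$ misses $z$, so these components coincide --- call it $E$ --- and then $\overline{E}=E\cup\{z\}=E\cup\{z'\}$ forces $z=z'$. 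Thus $\{B_z:z\in S\}$ is an uncountable family of pairwise disjoint nonempty open subsets of $X$, contradicting separability. Therefore $X$ has no simple closed curve, so $X$ is a dendrite.
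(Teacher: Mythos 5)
The paper offers no argument for this statement---it is quoted verbatim from \cite[Theorem 10.7]{Nadler}---so there is no internal proof to compare against; your proposal has to stand on its own, and as written it does not. Your necessity direction is essentially complete (the nested components $V_s$ of $X\setminus\{r_s\}$, the identity $\bigcap_{s>0}V_s=\{p\}$, and the diameter argument all work, modulo the standard facts you invoke about Peano continua and the small wrinkle that the limit point you produce a priori lies in $\bigcap_{s}\overline{V_s}$ rather than $\bigcap_s V_s$, which is repaired by monotonicity of the $V_s$ and distinctness of the $r_s$). The exclusion of simple closed curves, granting local connectedness, is also correct: the sets $B_z=X\setminus\overline{D_z}$ are nonempty, open, pairwise disjoint, and uncountable in number, contradicting separability.

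The genuine gap is the sufficiency direction's claim that $X$ is locally connected, which you yourself flag as ``where essentially all of the work lies'' and then do not carry out. This is not a routine verification that can be waved through: it is the hard half of the theorem, and the tools you gesture at are not available at that stage. In particular, the assertion that for $a\neq b$ the set of points separating $a$ from $b$, together with $\{a,b\}$ and the cut-point order, forms an arc is \emph{false} in a general continuum (in the topologist's sine curve the separating points between an endpoint of the limit segment and the free end of the graph form a non-closed, non-compact set); making it true here requires exploiting the full hypothesis that every point is a cut point or an end point, which is precisely the content being proved. Likewise, producing a single cut point $r\in U$ whose ``$p$-side'' component lies in $U$ is impossible when $p$ is a branch point, and the fallback of intersecting finitely many such components and claiming the intersection is connected implicitly uses that connected subsets of $X$ are arcwise connected and closed under intersection---properties of dendrites, i.e.\ of the conclusion. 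So as submitted the proposal proves ``dendrite $\Rightarrow$ every point is a cut point or an end point'' and ``locally connected $+$ hypothesis $\Rightarrow$ no simple closed curve,'' but the implication from the cut-point/end-point hypothesis to local connectedness is missing, and with it the theorem.
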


\begin{theorem}\cite[Corollary 10.5]{Nadler}\label{prop00}
Every subcontinuum of a dendrite is a dendrite.
\end{theorem}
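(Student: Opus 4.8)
The plan is to reduce everything to the classical ``Property S'' characterisation of local connectedness: a metric continuum $C$ is locally connected if and only if for every $\epsilon>0$ there are finitely many subcontinua $C_1,\dots,C_k$ of $C$, each of diameter less than $\epsilon$, with $C=C_1\cup\dots\cup C_k$. So let $Y$ be a subcontinuum of a dendrite $X$. If $Y$ is degenerate it is trivially a dendrite, so assume $Y$ is nondegenerate. A simple closed curve contained in $Y$ would be a simple closed curve contained in $X$, so $Y$ contains no simple closed curve; hence it only remains to prove that $Y$ is locally connected.

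The key technical point, which I expect to be the main obstacle, is the following ``convexity'' property of subcontinua of a dendrite: if $A$ is any subcontinuum of $X$ and $x,y\in A$, then the unique arc $[x,y]$ of $X$ is contained in $A$. To see this, suppose $z\in(x,y)$ with $z\notin A$. Using that $X$ is locally connected (so $X\setminus\{z\}$ is locally connected, its components are open, and an open connected subset of a locally connected continuum is arcwise connected) together with the fact that $X$ is \emph{uniquely} arcwise connected, one checks that $z$ separates $x$ from $y$ in $X$: otherwise $x$ and $y$ would lie in a common component of $X\setminus\{z\}$, which would then contain an arc from $x$ to $y$ avoiding $z$, contradicting the uniqueness of $[x,y]$. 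But $A$ is a connected subset of $X\setminus\{z\}$ containing both $x$ and $y$, which is impossible. Hence $[x,y]\subseteq A$. (Alternatively, this separation property of points of arcs in a dendrite is entirely standard and can simply be quoted.)

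From convexity I would deduce that the intersection of two subcontinua of $X$ is connected: if $A,B$ are subcontinua with $p,q\in A\cap B$, then $[p,q]\subseteq A$ and $[p,q]\subseteq B$, so $[p,q]\subseteq A\cap B$; thus $A\cap B$ is arcwise connected, and being closed it is a subcontinuum whenever it is nonempty. Now the argument finishes quickly. Fix $\epsilon>0$. Since $X$ is a dendrite it is locally connected, so by Property S we may write $X=C_1\cup\dots\cup C_k$ with each $C_i$ a subcontinuum of $X$ of diameter less than $\epsilon$. Then $Y=\bigcup_{i=1}^k (C_i\cap Y)$, and by the previous observation each nonempty $C_i\cap Y$ is a subcontinuum of $Y$ of diameter less than $\epsilon$. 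Hence $Y$ satisfies Property S, so $Y$ is locally connected, and therefore $Y$ is a dendrite.

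Finally, I note an alternative route through Theorem \ref{theo00}: one can instead show directly that every point $p\in Y$ is a cut point or an end point of $Y$. If $p$ is an end point of $X$ one verifies, using $|\mathrm{Bd}_Y(V\cap Y)|\le|\mathrm{Bd}_X(V)|$ for $V$ open in $X$ and the connectedness of $Y$ to exclude the case $\mathrm{Bd}_Y(V\cap Y)=\emptyset$, that $p$ is an end point of $Y$; and if $p$ is a cut point of $X$ one argues, again via the convexity property above, that either $Y\setminus\{p\}$ is disconnected (so $p$ is a cut point of $Y$) or $p$ is an end point of $Y$. This also works, but the Property S argument is shorter and isolates the one genuine difficulty, namely the convexity property of subcontinua.
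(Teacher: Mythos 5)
Your proof is correct. Note that the paper does not prove this statement at all: it is quoted from Nadler's book (Corollary 10.5), so there is no in-paper argument to compare against, and your write-up is a legitimate self-contained proof. The arc-convexity lemma you isolate (if $x,y$ lie in a subcontinuum $A$ of the dendrite $X$, then $[x,y]\subseteq A$) is exactly the standard fact that each interior point of an arc in a dendrite separates its endpoints, and it is the same engine behind the usual textbook derivation that dendrites are hereditarily locally connected; your Property S packaging then transfers local connectedness to $Y$ cleanly, because each nonempty $C_i\cap Y$ is a subcontinuum by that same convexity, and the no-simple-closed-curve condition is inherited trivially. Two minor points: the Property S characterization with \emph{subcontinua} of diameter less than $\epsilon$ is indeed available for continua (cover by finitely many open connected sets of diameter less than $\epsilon/2$ and take closures), and under the paper's definition a degenerate subcontinuum is a dendrite, so that case really is trivial. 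The alternative route you sketch via Theorem \ref{theo00} is looser (for instance, the case $\mathrm{Bd}_Y(V\cap Y)=\emptyset$ with $Y\subseteq V$ needs separate handling, and end points versus cut points of $Y$ require more care than indicated), but it is not needed: the main Property S argument stands on its own.
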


A continuum $X$ is said to be \textit{regular} provided that each point $p\in X$ has a local base $\mathcal{B}_p$ such that $|\mathrm{Bd}(B)|<\infty$, for each $B\in\mathcal{B}_p$.

\begin{theorem}\cite[Theorem 10.20]{Nadler}\label{theo01}
Every dendrite is regular.
\end{theorem}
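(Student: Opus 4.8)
The plan is to bootstrap the definition of an end point --- which already supplies neighborhoods with one-point boundaries --- up to an arbitrary point of the dendrite, by decomposing a given neighborhood into finitely many "ends" together with a null family of small components. Fix a dendrite $X$, a point $p\in X$, and an open set $U\ni p$ with $U\neq X$; we must find an open $B$ with $p\in B\subseteq U$ and $\mathrm{Bd}(B)$ finite. Since $X$ is locally connected and $\{p\}$ is closed, the components $\{C_i\}_{i\in I}$ of $X\setminus\{p\}$ are open in $X$, and as $X$ is a continuum one has $\overline{C_i}=C_i\cup\{p\}$ for each $i$. A standard compactness argument shows that for every $\delta>0$ only finitely many $C_i$ have diameter $\geq\delta$: otherwise, on pairwise different components $C_{i_k}$ pick $z_k$ with $d(p,z_k)\geq\delta/2$ and then $w_k\in[p,z_k]\subseteq\overline{C_{i_k}}$ with $d(p,w_k)=\delta/4$; a convergent subsequence of the (pairwise distinct) points $w_k$ would have a limit $w\neq p$ lying in some open $C_{i_0}$, forcing $w_k\in C_{i_0}$ for large $k$ and contradicting disjointness. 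Fixing $\epsilon>0$ with $\mathcal V(\{p\},\epsilon)\subseteq U$, all but finitely many components, say all but $C_1,\dots,C_m$, then satisfy $\overline{C_i}\subseteq\mathcal V(\{p\},\epsilon)\subseteq U$.

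For $i\leq m$, the set $\overline{C_i}$ is a nondegenerate subcontinuum of $X$, hence a dendrite by Theorem \ref{prop00}; since $\overline{C_i}\setminus\{p\}=C_i$ is connected, $p$ is not a cut point of $\overline{C_i}$, so $p$ is an end point of $\overline{C_i}$ by Theorem \ref{theo00}. Applying the definition of end point inside $\overline{C_i}$ to the relatively open set $U\cap\overline{C_i}\ni p$, I obtain $B_i$, open in $\overline{C_i}$, with $p\in B_i\subseteq U\cap\overline{C_i}$ and $\mathrm{Bd}_{\overline{C_i}}(B_i)=\{b_i\}$ for a single point $b_i$. Now set
\[
B=\{p\}\cup\bigcup_{i>m}C_i\cup\bigcup_{i=1}^m\bigl(B_i\setminus\{p\}\bigr).
\]
Each $B_i\setminus\{p\}$ is open in $C_i$, hence in $X$, and $X\setminus B=\bigcup_{i=1}^m\bigl(\overline{C_i}\setminus B_i\bigr)$ is a finite union of sets closed in $X$, so $B$ is open; plainly $p\in B\subseteq U$.

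It remains to bound $\mathrm{Bd}(B)$. Since the diameters of the $C_i$ tend to $0$, the set $\bigcup_{i>m}C_i$ has no accumulation point outside itself other than $p$, so its closure is contained in $B$; and $\overline{B_i\setminus\{p\}}\subseteq\overline{B_i}\subseteq B_i\cup\{b_i\}$. Hence $\overline{B}\subseteq B\cup\{b_1,\dots,b_m\}$, so $\mathrm{Bd}(B)\subseteq\{b_1,\dots,b_m\}$ is finite. As $p$ and $U$ were arbitrary, the sets $B$ constructed this way are the desired local bases, and $X$ is regular. I expect the one genuinely non-formal step to be the compactness estimate that isolates the finitely many components failing to fit inside $U$; the remainder is the bookkeeping required to confirm that the reassembled set $B$ is open and has finite boundary.
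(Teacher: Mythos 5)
Your proof is correct, and it is worth noting that the paper offers no argument for this statement at all: Theorem \ref{theo01} is simply quoted from Nadler, so yours is a self-contained alternative rather than a variant of an in-paper proof. Your route is the natural bootstrap: the components $C_i$ of $X\setminus\{p\}$ are open (local connectedness) with $\overline{C_i}=C_i\cup\{p\}$, the null-family estimate (your compactness argument with the points $w_k$ at fixed distance from $p$ on the arcs $[p,z_k]\subseteq\overline{C_{i_k}}$) absorbs all but finitely many components into $U$, and each exceptional $\overline{C_i}$ is a dendrite (Theorem \ref{prop00}) in which $p$ is a non-cut point, hence an end point by Theorem \ref{theo00}, so the paper's definition of end point hands you a relatively open $B_i\ni p$ with one-point boundary $\{b_i\}$. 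The reassembly checks out: $X\setminus B=\bigcup_{i\le m}(\overline{C_i}\setminus B_i)$ is a finite union of closed sets, and since closures taken in $X$ and in the closed set $\overline{C_i}$ agree, $\overline{B_i}=B_i\cup\{b_i\}$, while $\overline{\bigcup_{i>m}C_i}$ adds at most $p$; hence $\mathrm{Bd}(B)\subseteq\{b_1,\dots,b_m\}$. Two cosmetic remarks only: from $\mathrm{diam}(C_{i_k})\ge\delta$ you can guarantee a point $z_k$ with $d(p,z_k)>\delta/4$ (if every point of $C_{i_k}$ were within $\delta/4$ of $p$ the diameter would be at most $\delta/2$), but not necessarily $\ge\delta/2$ as written; this is harmless, since the intermediate-value choice of $w_k$ at distance exactly $\delta/4$ still goes through. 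And the excluded case $U=X$ (or a degenerate dendrite) is trivial, as $B=X$ has empty boundary. There is also no circularity: you use only the quoted Theorems \ref{theo00} and \ref{prop00}, not Lemma \ref{lemaf45f}, which is where the paper actually consumes regularity. Compared with citing Nadler, your argument buys a short, local proof living entirely inside the closures of the components of $X\setminus\{p\}$, at the price of leaning on the cut-point/end-point characterization \ref{theo00}, itself one of the deeper quoted facts.
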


The following lemma is probably  known but we include a proof for sake of completeness.

\begin{lemma}\label{lemaf45f}
Let $X$ be a dendrite and let $p\in \mathrm{End}(X)$. If $A$ is a compact subset of $X$ such that $p\notin A$, then there exists a subcontinuum $W$ of $X$ such that $A\subseteq W\subseteq X\setminus\{p\}$.
\end{lemma}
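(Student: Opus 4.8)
The plan is to take for $W$ the complement of a sufficiently small neighbourhood of $p$ having a one-point boundary; the end-point hypothesis on $p$ is exactly what makes such a neighbourhood available, and the only substantial step is verifying that its complement is connected.

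Concretely, we may assume $A\neq\emptyset$ (otherwise the argument below runs verbatim with $U=X$). Since $A$ is compact and $p\notin A$, the number $r=d(p,A)$ is positive, so $U=\mathcal V(\{p\},r)$ is open, contains $p$, and misses $A$. As $p\in\mathrm{End}(X)$, there is an open set $V$ with $p\in V\subseteq U$ and $|\mathrm{Bd}(V)|=1$; write $\mathrm{Bd}(V)=\{q\}$ and set $W=X\setminus V$. Then $W$ is closed in $X$, $p\notin W$ because $p\in V$, and $A\subseteq W$ because $V\subseteq U$ and $U\cap A=\emptyset$; also $q\in W$, so $W$ is nonempty. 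Thus it only remains to prove that $W$ is connected, and then $W$ is the required subcontinuum.

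For connectedness I would argue by contradiction. Suppose $W=C_1\cup C_2$ with $C_1,C_2$ nonempty, disjoint, and closed in $W$ — hence closed in $X$, since $W$ is closed in $X$; recall that each $C_i$ is then also open in $W$. Say $q\in C_1$. Since $V$ is open, $\overline V=V\cup\{q\}$, and since $C_2\subseteq W=X\setminus V$ with $q\notin C_2$ we get $C_2\cap\overline V=\emptyset$, that is, $C_2\subseteq X\setminus\overline V$. Now $X\setminus\overline V$ is an open subset of $X$ contained in $W$, so $C_2$, being open in $W$ and contained in $X\setminus\overline V$, is open in $X$. Therefore $C_2$ is a nonempty clopen subset of the connected space $X$, which forces $C_2=X$; but $p\notin C_2$, a contradiction. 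Hence $W$ is connected, and we are done.

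The one place where something genuinely has to be checked is the connectedness of $W=X\setminus V$, and this is also where the hypothesis that $p$ is an end point is used essentially: it is the equality $|\mathrm{Bd}(V)|=1$ that keeps $X\setminus V$ from falling apart (removing a neighbourhood of a branch point would split $X$). The remaining points — positivity of $d(p,A)$ and the inclusions $A\subseteq W$, $p\notin W$ — are immediate. One could alternatively recall from Theorem \ref{theo00} that an end point is not a cut point, so $X\setminus\{p\}$ is connected; but since we need a \emph{closed} connected set containing $A$, the neighbourhood construction above is the most direct route.
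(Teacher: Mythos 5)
Your proof is correct, but it takes a genuinely different route from the paper's. You work straight from the paper's definition of end point: inside $X\setminus A$ you find an open $V\ni p$ with $\mathrm{Bd}(V)=\{q\}$ and take $W=X\setminus V$, and the heart of your argument is the purely point-set fact that in a connected space the complement of a nonempty open set with one-point boundary is connected (any relatively clopen piece of $X\setminus V$ avoiding $q$ misses $\overline V$, hence is clopen in $X$ — your verification of this is airtight). The paper instead invokes regularity of dendrites to get a neighbourhood $U$ of $p$ inside $X\setminus A$ with \emph{finite} boundary, forms the continuum $T$ given by the union of all arcs joining points of $\mathrm{Bd}(U)$, and uses the boundary-bumping theorem \cite[Theorem 5.6]{Nadler} to see that $W=T\cup(X\setminus U)$ is connected; there the end-point hypothesis enters only to guarantee $p\notin T$. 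What your approach buys is economy and generality: it uses neither regularity, nor arcwise connectedness, nor boundary bumping — in fact it never uses that $X$ is a dendrite, only that $X$ is a continuum and that $p$ admits arbitrarily small neighbourhoods with one-point boundary — whereas the paper's argument is the natural one within the regular-continuum machinery it has already set up and would also go through verbatim if $p$ were merely assumed to have finite-boundary neighbourhoods separating it from $A$ together with the end-point property ensuring $p$ lies on no arc $T$. Both are complete proofs; yours is the more elementary and slightly more general of the two.
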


\begin{proof}
Since $X$ is regular (see Theorem \ref{theo01}), there exists an open subset $U$ of $X$ such that $p\in U\subseteq X\setminus A$ and $|\mathrm{Bd}(U)|<\infty$. Let $T$ be the union of all posible arcs with end points in $\mathrm{Bd}(U)$. Note that $T$ is a continuum  such that $p\notin T$. Furthermore, each component of $X\setminus U$ intersects $T$, by \cite[Theorem 5.6]{Nadler}. Therefore, $W=T\cup (X\setminus U)$ is a continuum such that $A\subseteq W\subseteq X\setminus\{p\}$.
\end{proof}

\begin{theorem}\cite[Theorem 10.31]{Nadler}\label{fixpoint}
If $X$ is a dendrite and $f\colon X\to X$ is a map, then $\mathrm{Fix}(f)\neq\emptyset$.
\end{theorem}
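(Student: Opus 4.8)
The plan is to reduce the statement to the classical fixed point property of \emph{finite trees} (dendrites with only finitely many branch points), exploiting that a dendrite is approximated from inside by finite subtrees via first-point retractions with small displacement. We may assume that $X$ is nondegenerate, the other case being trivial. Arguing by contradiction, suppose $f$ has no fixed point. Since $x\mapsto d(x,f(x))$ is continuous and strictly positive on the compact space $X$, there is $\delta>0$ with $d(x,f(x))\geq\delta$ for every $x\in X$.

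Next I would invoke a standard structural fact about dendrites, namely a quantitative form of the statement that $X$ is the inverse limit of finite trees under first-point retractions: for every $\epsilon>0$ there exist a finite subtree $T\subseteq X$ and a continuous first-point retraction $r\colon X\to T$ with $\sup_{x\in X}d\bigl(x,r(x)\bigr)<\epsilon$; it is here that local connectedness of $X$ is used in an essential way. Fix such $T$ and $r$ with $\epsilon=\delta$. By Theorem \ref{prop00}, $T$ is a dendrite, and as a finite tree it is a compact contractible polyhedron, hence has the fixed point property (for instance by the Lefschetz fixed point theorem, or by an elementary induction on the number of edges of $T$). Therefore the continuous self-map $g:=r\circ f|_{T}\colon T\to T$ has a fixed point $t^{\ast}\in T$, that is, $r\bigl(f(t^{\ast})\bigr)=t^{\ast}$. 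Then
\[
\delta\;\leq\; d\bigl(t^{\ast},f(t^{\ast})\bigr)\;=\;d\bigl(r(f(t^{\ast})),f(t^{\ast})\bigr)\;\leq\;\sup_{x\in X}d\bigl(x,r(x)\bigr)\;<\;\delta ,
\]
a contradiction, so $\mathrm{Fix}(f)\neq\emptyset$.

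The step I expect to require the most care is the existence of the displacement-$<\epsilon$ retraction onto a finite subtree; this is precisely where local connectedness of $X$ is indispensable (the conclusion fails for general tree-like continua). The fixed point property of finite trees, used above as a black box, is classical and could be established from scratch by induction on the number of edges, should a fully self-contained argument be wanted. I would also record a convenient preliminary normalization, not needed here: one may always assume $f$ surjective, since otherwise we may replace $X$ by the subdendrite $\bigcap_{n\in\N}f^{n}(X)$, on which $f$ restricts to a surjection.
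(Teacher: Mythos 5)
The paper offers no argument for this statement at all: it is quoted verbatim from Nadler's book (Theorem 10.31), so there is no internal proof to compare yours with, and what you give is a genuine alternative to a citation. Your scheme is correct: if $d(x,f(x))\geq\delta$ for all $x$, a continuous first-point retraction $r\colon X\to T$ onto a finite subtree with displacement smaller than $\delta$ makes $r\circ f|_T$ a self-map of $T$, whose fixed point $t^{\ast}$ satisfies $d(t^{\ast},f(t^{\ast}))=d\bigl(r(f(t^{\ast})),f(t^{\ast})\bigr)<\delta$, a contradiction. The step you flag is indeed the only delicate one, and it is a classical fact that can be justified with tools the paper already has: by regularity of dendrites (Theorem \ref{theo01}), cover $X$ by finitely many open sets of diameter less than $\epsilon$ with finite boundary; the union $F$ of these boundaries is a finite set met by every connected subset of diameter at least $\epsilon$, so the finite tree $T$ spanned by $F$ (a subdendrite, by Theorem \ref{prop00}) has the property that every component of $X\setminus T$ has diameter less than $\epsilon$; since each such component has one-point boundary in $T$ (the same device used in Lemma \ref{lemaf45f}), the first-point map $r_T$ moves every point by less than $\epsilon$, and its continuity is the standard fact about first-point maps onto subcontinua of dendrites (also in Nadler). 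Two minor touches: apply this with $\epsilon=\delta/2$, since with infinitely many components the supremum of the displacements need only be $\leq\epsilon$ rather than $<\epsilon$; and the fixed point property of finite trees is, as you say, classical (Lefschetz, or induction on edges). So the proposal is sound; the trade-off relative to the paper is simply that the paper outsources the whole theorem to Nadler, while you reduce it to the polyhedral case at the price of invoking the tree-approximation structure of dendrites, which is of roughly the same depth as the cited result.
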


The following theorem  provides a crucial property  of $\omega(x,f)$ when $X$ is a dendrite and $f$ a homeomorphism.

\begin{theorem}\cite[Theorem 3.8]{Acosta}
\label{acosta}
Let $X$ be a dendrite and $f\colon X\to X$ be a homeomorphism. For all $x\in X$, $\omega(x,f)$ is either a periodic orbit or a Cantor set.
\end{theorem}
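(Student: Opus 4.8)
The plan is to prove the dichotomy directly. Write $\omega=\omega(x,f)$; by Proposition \ref{prop01} it is nonempty and compact with $f(\omega)=\omega$, and since $f$ is a homeomorphism so is $f|_\omega$, a fact I will use throughout. I would split into the cases $\omega$ finite and $\omega$ infinite. If $\omega$ is finite, the standard ``tracking'' argument applies: let $\delta>0$ be the minimal distance between distinct points of $\omega$ and, using compactness of $X$ and uniform continuity of $f$, pick $N$ so large that for $n\ge N$ the point $f^n(x)$ is $\delta/3$-close to a unique $q_n\in\omega$ and moreover $q_{n+1}=f(q_n)$; then $(q_n)_{n\ge N}$ eventually runs around a single cycle $\mathcal C$ of the permutation $f|_\omega$, and since $f^n(x)$ stays $\delta/3$-close to $\mathcal C$ from time $N$ on, the set of limit points of the orbit is exactly $\mathcal C$, so $\omega$ is a periodic orbit.

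If $\omega$ is infinite, it is enough to show it is perfect and totally disconnected, for a nonempty compact metric space with these two properties is a Cantor set. For perfectness I would argue by contradiction: if $p\in\omega$ were isolated, choose an open $U$ with $\overline U\cap\omega=\{p\}$; since $d(f^n(x),\omega)\to 0$ there is a ``gap'' $\rho>0$ and a time beyond which every $f^n(x)$ is either in $U$ or at distance $\ge\rho$ from $p$, while (because $p\in\omega$) the orbit re-enters $U$ infinitely often. Following the orbit one step at a time with the continuity of $f$, and using that $f$ (being a homeomorphism) carries isolated points of $\omega$ to isolated points of $\omega$, one sees that the orbit of $x$ must eventually coincide with the orbit of $p$; hence $p$ is periodic and $\omega$ is a finite periodic orbit, contradicting that $\omega$ is infinite. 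So $\omega$ has no isolated points.

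The heart of the matter is total disconnectedness, and this is the only place where both the dendrite hypothesis and the homeomorphism hypothesis are genuinely needed. Suppose $\omega$ contained a nondegenerate subcontinuum $A$; by Theorem \ref{prop00} it would be a nondegenerate subdendrite, and since $\omega$ is $f$-invariant with $f$ a homeomorphism, all the translates $f^n(A)$, $n\in\mathbb Z$, would be nondegenerate subdendrites contained in $\omega$. If two distinct translates meet, say $A\cap f^k(A)\ne\emptyset$ with $k\ne 0$, then $C:=\overline{\bigcup_{m\in\mathbb Z}f^{mk}(A)}$ is a nondegenerate subdendrite of $\omega$ with $f^k(C)=C$, so by Theorem \ref{fixpoint} the homeomorphism $f^k|_C$ has a fixed point and $\omega$ contains an $f^k$-invariant nondegenerate subdendrite. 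If instead the translates $f^n(A)$ are pairwise disjoint, then they form an infinite family of pairwise disjoint subcontinua of the compact regular continuum $X$ (Theorem \ref{theo01}), so $\diam\bigl(f^n(A)\bigr)\to 0$ as $|n|\to\infty$, whereas $\diam(A)>0$ is fixed and the orbit of $x$ must accumulate on every point of $A$. In both alternatives one has to derive a contradiction from the fact that a homeomorphism of a nondegenerate dendrite has a fixed point (Theorem \ref{fixpoint}), hence a non-dense orbit, and therefore cannot support the recurrence that an $\omega$-limit set carrying a nondegenerate subdendrite would force on it. This last implication is the step I expect to be the main obstacle; I would attack it by repeatedly applying Theorem \ref{fixpoint} inside subdendrites of $\omega$ to locate periodic points and playing these off against the internal transitivity of $\omega$-limit sets. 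Granting it, $\omega$ contains no nondegenerate subcontinuum, hence is totally disconnected, and being also nonempty, compact and perfect, it is a Cantor set.
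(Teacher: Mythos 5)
There is no internal proof to compare against: the paper quotes this statement verbatim from \cite[Theorem 3.8]{Acosta} and offers no argument, so your proposal has to stand on its own, and as written it has two genuine gaps. The first is the perfectness step. In the infinite case your argument uses only compactness, continuity and the fact that $f$ is a homeomorphism, and the principle it implicitly relies on --- that an isolated point of an infinite $\omega$-limit set of a homeomorphism of a compact metric space forces the orbit to shadow a periodic orbit --- is false. For the shift homeomorphism on $\{0,1\}^{\mathbb{Z}}$ and a point $x$ whose $1$'s occur at positions with gaps tending to infinity (and no $1$'s to the left), $\omega(x,f)$ is the all-zero sequence together with all sequences containing exactly one symbol $1$: an infinite $\omega$-limit set in which every point but one is isolated. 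So perfectness of infinite $\omega$-limit sets is not a general fact about homeomorphisms; it must come from the dendrite structure, which your sketch does not use at that point, contrary to your remark that the dendrite hypothesis is needed only for total disconnectedness.

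The second gap is the total disconnectedness step, which you yourself call the heart of the matter and then leave unproved (``Granting it\dots''). After producing, via Theorem \ref{fixpoint}, a periodic point inside an invariant nondegenerate subdendrite of $\omega(x,f)$, you appeal to the slogan that a fixed point gives a non-dense orbit and ``therefore cannot support the recurrence'' of an $\omega$-limit set. But $\omega$-limit sets are not minimal in general; they are only internally chain transitive, and for continuous maps (already on the interval) an infinite $\omega$-limit set can contain fixed or periodic points. Hence the mere presence of a periodic point, or of a point with non-dense orbit, inside $\omega(x,f)$ is not a contradiction; excluding a nondegenerate subcontinuum is exactly the substantive content of \cite[Theorem 3.8]{Acosta}, and your outline does not supply it. Also, in the ``pairwise disjoint translates'' case, the fact that $\diam\bigl(f^n(A)\bigr)\to 0$ (which is indeed true in a dendrite, since for each $\epsilon>0$ a dendrite contains only finitely many pairwise disjoint subcontinua of diameter at least $\epsilon$) contradicts nothing by itself, because $A=f^0(A)$ is just one member of the family; an additional argument is needed to transport the large diameter of $A$ to infinitely many translates. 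The finite case and the initial reductions are fine, but the proposal as it stands does not prove the theorem.
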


\section{Some  results for compact metric spaces}

The main result of this paper is about dendrites but some of the auxiliary lemmas needed are valid in general for compact metric spaces, so we collect them in this section.

If $X$ is a compact metric space and $f\colon X\to X$ is a map, then $\{f^n(X) : n\in\N\}$ is a decreasing sequence of compacta; thus $\bigcap_{n\in\N}f^n(X)$ is a nonempty compact set. The next lemma is probably known but we include a simple proof for the convenience of the reader.

\begin{lemma}\label{claim00}
Let $X$ be a compact metric space and $f\colon X\to X$ be a map. If $J=\bigcap_{n\in\N}f^n(X)$, then $f(J)=J$.
\end{lemma}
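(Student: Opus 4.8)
The plan is to prove the two inclusions $f(J)\subseteq J$ and $J\subseteq f(J)$ separately, using compactness of $X$ in an essential way for the second one. For the first inclusion, recall that $J=\bigcap_{n\in\N}f^n(X)$, so any $y\in f(J)$ has the form $y=f(x)$ with $x\in f^n(X)$ for every $n$; hence $y\in f(f^n(X))=f^{n+1}(X)$ for every $n$, which gives $y\in J$. (One should be slightly careful about the indexing convention — whether $f^0(X)=X$ is included — but this does not affect the argument since the sequence $\{f^n(X)\}_n$ is decreasing.)

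For the reverse inclusion, let $y\in J$. For each $n\in\N$ we have $y\in f^{n+1}(X)=f(f^n(X))$, so there exists $x_n\in f^n(X)$ with $f(x_n)=y$. Since $\{f^n(X)\}_n$ is a decreasing sequence of compact sets, for $m\ge n$ the point $x_m$ lies in $f^m(X)\subseteq f^n(X)$. By compactness of $X$, the sequence $(x_n)_n$ has a convergent subsequence $x_{n_i}\to x$. I claim $x\in J$: for each fixed $n$, all but finitely many terms $x_{n_i}$ lie in the closed set $f^n(X)$, so the limit $x$ lies in $f^n(X)$; as $n$ was arbitrary, $x\in J$. Finally, by continuity of $f$ we get $f(x)=\lim_i f(x_{n_i})=y$, so $y=f(x)\in f(J)$.

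Combining the two inclusions yields $f(J)=J$. The only step requiring genuine care is the reverse inclusion: the naive attempt "$y\in f^{n+1}(X)$ for all $n$, therefore $y=f(x)$ for a single $x\in J$" fails because the preimage point a priori depends on $n$; the fix is the standard compactness-plus-diagonal argument above, extracting a convergent subsequence of the preimages and using that each $f^n(X)$ is closed. This is the main (and essentially the only) obstacle, and it is routine.
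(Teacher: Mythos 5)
Your proof is correct and follows essentially the same route as the paper's: the easy inclusion $f(J)\subseteq\bigcap_n f^{n+1}(X)=J$, and for $J\subseteq f(J)$ the choice of preimages $x_n\in f^n(X)$ of $y$, extraction of a convergent subsequence by compactness, membership of the limit in every $f^n(X)$ because these sets are closed and the sequence is decreasing, and continuity of $f$ to conclude $f(x)=y$. No gaps; nothing further is needed.
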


\begin{proof}
	Observe that $f(J)\subseteq \bigcap_{n\in\N}f^{n+1}(X)=J$. Thus, $f(J)\subseteq J$. Conversely, let $y\in J$ be given. For each $n\in\N\setminus\{1\}$, let $x_n\in f^{n-1}(X)$ be such that $f(x_n)=y$. Since $X$ is compact, there exists a subsequence  $(x_{n_k})_{k\in\N}$ of $(x_n)_{n\in\N}$ such that $\lim_{k\to\infty}x_{n_k}=x_0$, for some $x_0\in X$.
	
	We show that $x_0\in J$. If $l\in\N$, then it is clear that $x_{n_k}\in f^{n_k-1}(X)\subseteq f^l(X)$, for each $n_k-1\geq l$. Since $f^l(X)$ is compact, $x_0\in f^l(X)$. Thus, $x_0\in J$. Finally, observe that $f(x_0)=y$ because $f(x_n)=y$, for each $n\in \N$. Therefore, $y\in f(J)$ and  we are done.
\end{proof}

\begin{lemma}\label{Lema032a}
Let $X$ be a compact metric space and let $f\colon X\to X$ be a map. If $\omega_f$ is continuous and $\mathrm{R}(f) \subseteq \overline{\mathrm{Per}(f)}$, then $\mathrm{R}(f)=\overline{\mathrm{Per}(f)}$.
\end{lemma}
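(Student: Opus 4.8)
The plan is to reduce to the single nontrivial inclusion $\overline{\mathrm{Per}(f)}\subseteq \mathrm{R}(f)$. Indeed, the hypothesis already gives $\mathrm{R}(f)\subseteq\overline{\mathrm{Per}(f)}$, and the inclusion $\mathrm{Per}(f)\subseteq\mathrm{R}(f)$ is immediate: if $f^{k}(x)=x$ with $k\in\N$, then $f^{ik}(x)=x$ for every $i$, so taking $n_{i}=ik$ we get $\lim_{i\to\infty}f^{n_{i}}(x)=x$, i.e. $x\in\omega(x,f)$. Consequently, once $\overline{\mathrm{Per}(f)}\subseteq\mathrm{R}(f)$ is established, we have $\overline{\mathrm{Per}(f)}\subseteq\mathrm{R}(f)\subseteq\overline{\mathrm{Per}(f)}$ and the lemma follows.

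The one technical ingredient I would record first is an elementary property of the Hausdorff metric: if $(A_{i})_{i\in\N}$ is a sequence in $2^{X}$ with $A_{i}\to A$, and if $a_{i}\in A_{i}$ for each $i$ with $a_{i}\to a$, then $a\in A$. This is routine: given $\epsilon>0$, for large $i$ we have $\mathcal H(A_{i},A)<\epsilon/2$, hence $a_{i}\in\mathcal V(A,\epsilon/2)$, and also $d(a,a_{i})<\epsilon/2$, so $d(a,A)<\epsilon$; since $\epsilon$ is arbitrary and $A$ is closed, $a\in A$.

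Now for the main inclusion, let $x\in\overline{\mathrm{Per}(f)}$ and choose a sequence $(x_{i})_{i\in\N}$ in $\mathrm{Per}(f)$ with $x_{i}\to x$. For each $i$, since $x_{i}$ is periodic, its orbit $\mathcal O(x_{i},f)$ is finite and $\omega(x_{i},f)=\mathcal O(x_{i},f)$; in particular $x_{i}\in\omega_{f}(x_{i})$. By continuity of $\omega_{f}$ we have $\omega_{f}(x_{i})\to\omega_{f}(x)$ in $2^{X}$. Applying the property above with $A_{i}=\omega_{f}(x_{i})$, $a_{i}=x_{i}$ and $a=x$, we conclude $x\in\omega_{f}(x)=\omega(x,f)$, that is, $x\in\mathrm{R}(f)$. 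This proves $\overline{\mathrm{Per}(f)}\subseteq\mathrm{R}(f)$ and completes the argument.

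I do not anticipate any serious obstacle: the proof is short, and the only points requiring care are the identification $\omega(x_{i},f)=\mathcal O(x_{i},f)$ for periodic $x_{i}$ (so that $x_{i}$ genuinely belongs to its own $\omega$-limit set) and the passage to the Hausdorff limit, both of which are elementary. Note that the hypothesis $\mathrm{R}(f)\subseteq\overline{\mathrm{Per}(f)}$ is used only to obtain the reverse inclusion, while the continuity of $\omega_{f}$ is what drives the inclusion $\overline{\mathrm{Per}(f)}\subseteq\mathrm{R}(f)$.
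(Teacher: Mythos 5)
Your proof is correct and follows essentially the same route as the paper: take $x\in\overline{\mathrm{Per}(f)}$, approximate by periodic points $x_i$ (each satisfying $x_i\in\omega(x_i,f)$), and use continuity of $\omega_f$ together with the Hausdorff-metric limit to conclude $x\in\omega(x,f)$. The only difference is that you spell out the elementary Hausdorff-limit step and the trivial inclusion $\mathrm{Per}(f)\subseteq\mathrm{R}(f)$, which the paper leaves implicit.
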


\begin{proof}
Let $x\in \overline{\mathrm{Per}(f)}$ and  a sequence $(x_i)_{i\in\N}$ in $\mathrm{Per}(f)$ such that $\lim_{i\to\infty}x_i=x$. Since $\omega_f$ is continuous, $\lim_{i\to\infty}\omega(x_i,f)=\omega(x,f)$. Note that $x_i\in\omega(x_i,f)$, for each $i\in\N$. Therefore, $x\in\omega(x,f)$ and $x\in \mathrm{R}(f)$.
\end{proof}

The previous result can also be stated as follows.

\begin{lemma}\label{Lema032}
Let $X$ be a compact metric space and  $f\colon X\to X$ be a map. If $\omega_f$ is continuous and $\overline{\mathrm{Per}(f)}=\bigcap_{n\in\N}f^n(X)$, then $\mathrm{R}(f)=\bigcap_{n\in\N}f^n(X)$.
\end{lemma}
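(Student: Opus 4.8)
The plan is to combine Lemma~\ref{Lema032a} with the hypothesis via a short chain of inclusions. First I would observe that, in general, $\mathrm{Per}(f)\subseteq\mathrm{R}(f)$ (a periodic point clearly returns to itself), hence $\overline{\mathrm{Per}(f)}\subseteq\overline{\mathrm{R}(f)}$; and since $\mathrm{R}(f)\subseteq\bigcap_{n\in\N}f^n(X)$ always holds — any recurrent point $x$ satisfies $x\in\omega(x,f)\subseteq f^n(X)$ for every $n$ because $\omega(x,f)$ is the limit of iterates lying in $f^n(X)$ for all large indices, and $f^n(X)$ is compact — we obtain $\mathrm{R}(f)\subseteq\bigcap_{n\in\N}f^n(X)=\overline{\mathrm{Per}(f)}$ using the hypothesis. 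Thus the assumption $\mathrm{R}(f)\subseteq\overline{\mathrm{Per}(f)}$ of Lemma~\ref{Lema032a} is satisfied.

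Next I would invoke Lemma~\ref{Lema032a}: since $\omega_f$ is continuous and $\mathrm{R}(f)\subseteq\overline{\mathrm{Per}(f)}$, we conclude $\mathrm{R}(f)=\overline{\mathrm{Per}(f)}$. Combining this equality with the hypothesis $\overline{\mathrm{Per}(f)}=\bigcap_{n\in\N}f^n(X)$ gives immediately $\mathrm{R}(f)=\bigcap_{n\in\N}f^n(X)$, which is the desired conclusion.

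The only step that requires a small argument rather than a citation is the inclusion $\mathrm{R}(f)\subseteq\bigcap_{n\in\N}f^n(X)$; I expect this to be the ``main obstacle,'' though it is routine. Given $x\in\mathrm{R}(f)$, by definition $x\in\omega(x,f)$, so there is an increasing sequence $(n_i)$ with $f^{n_i}(x)\to x$. Fix $l\in\N$; for all $i$ with $n_i\geq l$ we have $f^{n_i}(x)\in f^l(X)$, and since $f^l(X)$ is compact (hence closed) the limit $x$ also lies in $f^l(X)$. As $l$ was arbitrary, $x\in\bigcap_{n\in\N}f^n(X)$. This completes the proof.
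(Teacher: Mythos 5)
Your proof is correct and follows essentially the same route as the paper, which presents Lemma~\ref{Lema032} as a restatement of Lemma~\ref{Lema032a}: the hypothesis $\overline{\mathrm{Per}(f)}=\bigcap_{n\in\N}f^n(X)$ together with the (always valid) inclusion $\mathrm{R}(f)\subseteq\bigcap_{n\in\N}f^n(X)$ yields $\mathrm{R}(f)\subseteq\overline{\mathrm{Per}(f)}$, and Lemma~\ref{Lema032a} finishes the argument. Your explicit verification of that inclusion, which the paper leaves implicit, is accurate.
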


\begin{lemma}\label{Prop3d3}
Let $X$ be a compact metric space and  $f\colon X\to X$ be a map. If $\omega_f$ is continuous and $\overline{\mathrm{Per}(f)}=\bigcap_{n\in\N}f^n(X)$, then $\omega(x,f)=\Omega(x,f)$ for each $x\in X$.
\end{lemma}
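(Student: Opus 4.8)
The inclusion $\omega(x,f)\subseteq\Omega(x,f)$ is immediate from the definitions (take the constant sequence $x_i=x$), so the work is to prove $\Omega(x,f)\subseteq\omega(x,f)$. Fix $x\in X$ and $y\in\Omega(x,f)$, with witnessing sequences $x_i\to x$ and $n_i\uparrow\infty$ such that $f^{n_i}(x_i)\to y$. By Proposition \ref{prop01}, $\Omega(x,f)$ is compact and $f$-invariant; the key structural input I want to exploit is Lemma \ref{Lema032}, which under the standing hypotheses gives $\mathrm R(f)=\bigcap_{n\in\N}f^n(X)=\overline{\mathrm{Per}(f)}=:J$. So it suffices to show, on the one hand, that $\Omega(x,f)\subseteq J$, and on the other hand, that every point of $\Omega(x,f)$ actually lies in $\omega(x,f)$.

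For the first part, note that $f^{n_i}(x_i)\in f^{n_i}(X)$, and since $n_i\to\infty$ and the sets $f^n(X)$ are decreasing and closed, the limit $y$ lies in $f^m(X)$ for every $m$, hence $y\in J$. Thus $\Omega(x,f)\subseteq J=\mathrm R(f)$, so in particular $y$ is recurrent: $y\in\omega(y,f)$. Now I use continuity of $\omega_f$. Since $x_i\to x$ we get $\omega(x_i,f)\to\omega(x,f)$ in the Hausdorff metric. The plan is to locate, for each $i$, a point of $\omega(x_i,f)$ near $f^{n_i}(x_i)$ — for instance any limit point of the orbit tail $\{f^{n_i+k}(x_i):k\ge0\}$ lies in $\omega(x_i,f)$, and by continuity of $f$ and compactness one can find such a point close to $f^{n_i}(x_i)$ provided $f^{n_i}(x_i)$ is itself close to the invariant set $\omega(x,f)$; iterating/using that $y\in\omega(y,f)$ and $\omega(\cdot,f)$ behaves well, I want to conclude $y\in\lim_i\omega(x_i,f)=\omega(x,f)$. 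Concretely: $y\in\omega(y,f)$ means there are arbitrarily large $m$ with $f^m(y)$ close to $y$; since $f^{n_i}(x_i)\to y$, continuity of $f^m$ gives $f^{n_i+m}(x_i)$ close to $f^m(y)$, hence close to $y$, for $i$ large; the point $\lim_k f^{n_i+m+k}(x_i)$ (a subsequential limit) lies in $\omega(x_i,f)$ and can be taken within $\varepsilon$ of $y$ by choosing $m$, then $i$, appropriately. Therefore $\mathcal H(\omega(x_i,f),\{y\}\cup\omega(x_i,f))$-type estimates force $y$ into the Hausdorff limit $\omega(x,f)$.

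The main obstacle I anticipate is making the last step fully rigorous: showing that $y$ belongs to $\omega(x_i,f)$ up to an error going to $0$, so that the Hausdorff convergence $\omega(x_i,f)\to\omega(x,f)$ delivers $y\in\omega(x,f)$. The delicate point is that $\omega(x_i,f)$ need not contain $f^{n_i}(x_i)$ itself, only limit points of its forward orbit; one must pass from ``$f^{n_i}(x_i)$ close to the recurrent point $y$'' to ``$\omega(x_i,f)$ meets a small ball around $y$,'' uniformly enough in $i$. Recurrence of $y$ (guaranteed by $\Omega(x,f)\subseteq J=\mathrm R(f)$, via Lemma \ref{Lema032}) is exactly what makes this possible, since it lets us return near $y$ after finitely many further iterations without leaving a controlled neighborhood. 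Once that is in hand, the equality $\omega(x,f)=\Omega(x,f)$ follows, and the two easy inclusions close the argument.
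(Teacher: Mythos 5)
Your skeleton is sound up to the last step: the inclusion $\omega(x,f)\subseteq\Omega(x,f)$ is trivial; the observation that $\Omega(x,f)\subseteq\bigcap_{n\in\N}f^n(X)$ (because $f^{n_i}(x_i)\in f^{n_i}(X)$ and these sets are closed and decreasing) is correct; and Lemma \ref{Lema032} then gives that $y$ is recurrent, $y\in\omega(y,f)$. The genuine gap is in the step where you try to show that $\omega(x_i,f)$ meets a small ball around $y$ for large $i$ by orbit chasing. Knowing that $f^{n_i+m}(x_i)$ is close to $y$ for certain values of $m$ (those coming from near-returns $f^m(y)\approx y$, each valid only for $i$ beyond a threshold depending on $m$) does not put any \emph{accumulation point} of the orbit of $x_i$ near $y$: for a fixed $i$ you only obtain finitely many visits of that orbit to $B(y,\varepsilon)$, and a subsequential limit $\lim_k f^{n_i+m+k}(x_i)$ may well be far from $y$. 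Recurrence of $y$ does not repair this, since the thresholds $i(m)$ can tend to infinity, so you cannot conclude that the orbit of a fixed $x_i$ returns to $B(y,\varepsilon)$ infinitely often. The claim ``$\omega(x_i,f)$ meets a small ball around $y$, uniformly enough in $i$'' is exactly what you would need, and it is not proved.

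The missing idea is much simpler than the quantitative argument you sketch: omega-limit sets depend only on the tail of the orbit, so $\omega(x_i,f)=\omega(f^{n_i}(x_i),f)$ for every $i$, and continuity of $\omega_f$ should be used along \emph{both} convergent sequences, not just $x_i\to x$. Since $f^{n_i}(x_i)\to y$, continuity gives $\omega(f^{n_i}(x_i),f)\to\omega(y,f)$, while $x_i\to x$ gives $\omega(x_i,f)\to\omega(x,f)$; the tail identity then forces $\omega(x,f)=\omega(y,f)$. Combined with your (correct) conclusion that $y\in\omega(y,f)$ via Lemma \ref{Lema032}, this yields $y\in\omega(x,f)$, finishing the proof. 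This is precisely the paper's argument; your proposal contains all of its ingredients except this identity and the second application of continuity, and the orbit-return argument you offer in their place does not go through.
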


\begin{proof}
Let $x\in X$ be given. Clearly $\omega(x,f)\subseteq \Omega(x,f)$. To  see that $\Omega(x,f)\subseteq \omega(x,f)$, let $y\in\Omega(x,f)$. Then there exist $(x_k)_{k\in\N}$ and $(n_k)_{k\in\N}$ in $\N$, such that $\lim_{k\to\infty}x_k=x$ and $\lim_{k\to\infty}f^{n_k}(x_k)=y$. Since $\omega_f$ is continuous, we have both $\lim_{k\to\infty}\omega(x_k,f)=\omega(x,f)$ and $\lim_{k\to\infty}\omega(f^{n_k}(x_k),f)=\omega(y,f)$. Notice that $\omega(x_k,f)=\omega(f^{n_k}(x_k),f)$ for each $k\in\N$. Hence, $\omega(x,f)=\omega(y,f)$. Since $\Omega(x,f)\subseteq \bigcap_{n\in\N}f^n(X)$, then $y\in R(f)$, by Lemma \ref{Lema032}. Therefore, $y\in\omega(y,f)=\omega(x,f)$ and  thus $\omega(x,f)=\Omega(x,f)$.
\end{proof}

The following was shown  in \cite{Sun} for dendrites.

\begin{lemma}
\label{equi-omega-iguales}
Let $(X,d)$ be a compact metric space and $f\colon X\to X$ equicontinuous. Then $\Omega(x,f^n)=\omega(x,f^n)$ for all $x\in X$ and $n\in \mathbb N$.
\end{lemma}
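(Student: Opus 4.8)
The plan is to fix $x \in X$ and $n \in \mathbb{N}$, and to prove the two inclusions; as usual $\omega(x,f^n) \subseteq \Omega(x,f^n)$ is immediate (take the constant sequence $x_i = x$), so the work is to show $\Omega(x,f^n) \subseteq \omega(x,f^n)$. Let $y \in \Omega(x,f^n)$, so there are sequences $(x_i)_{i\in\mathbb{N}}$ with $x_i \to x$ and an increasing $(k_i)_{i\in\mathbb{N}}$ in $\mathbb{N}$ with $(f^n)^{k_i}(x_i) = f^{nk_i}(x_i) \to y$. The idea is to use equicontinuity to "transfer" the orbit of $x_i$ (which approximates the orbit of $x$ uniformly) so that $y$ can be recovered as a genuine limit point of the orbit of $x$ under $f^n$.

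The key steps, in order: First I would unwind equicontinuity of $f$ to get equicontinuity of $f^n$, and then note the standard consequence that $\{f^{nm} : m \in \mathbb{N}\}$ is a uniformly equicontinuous family, so for every $\varepsilon > 0$ there is $\delta > 0$ with $d(x,x') < \delta \Rightarrow d(f^{nm}(x), f^{nm}(x')) < \varepsilon$ for all $m$. Second, given $\varepsilon > 0$, choose such a $\delta$; pick $i$ large enough that $d(x_i, x) < \delta$ and that $d(f^{nk_i}(x_i), y) < \varepsilon$. Then $d(f^{nk_i}(x), f^{nk_i}(x_i)) < \varepsilon$ by the choice of $\delta$, so $d(f^{nk_i}(x), y) < 2\varepsilon$. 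Third, since $\varepsilon$ is arbitrary and we can always force $k_i$ to be as large as we like (the $k_i$ are strictly increasing, and for each $\varepsilon$ we just need one sufficiently large index $i$), this produces an increasing sequence of exponents $nk_{i_j}$ with $f^{nk_{i_j}}(x) \to y$, witnessing $y \in \omega(x, f^n)$. Hence $\Omega(x,f^n) = \omega(x,f^n)$.

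The only mild subtlety — not really an obstacle — is the bookkeeping needed to guarantee that the exponents $nk_{i_j}$ can be chosen strictly increasing so that the resulting subsequence of the orbit of $x$ is indexed by an increasing sequence in $\mathbb{N}$, as the definition of $\omega$ requires; this is handled by passing to a subsequence, using that $(k_i)$ is already increasing and that for each prescribed threshold $N$ and each $\varepsilon = 1/j$ one can pick $i$ with $k_i > N$ and $d(f^{nk_i}(x_i), y) < 1/j$ simultaneously. Everything else is a direct two-$\varepsilon$ estimate relying solely on uniform equicontinuity of the iterates of $f^n$, which is exactly what the hypothesis supplies.
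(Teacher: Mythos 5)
Your proposal is correct and follows essentially the same route as the paper: reduce to the equicontinuity of the iterates (under the paper's definition the same $\delta$ already works for all powers $f^{nm}$), then use the two-$\varepsilon$ estimate $d(f^{nk_i}(x),y)\le d(f^{nk_i}(x),f^{nk_i}(x_i))+d(f^{nk_i}(x_i),y)$ to conclude $y\in\omega(x,f^n)$. The only cosmetic difference is that the paper notes the full sequence $f^{n_k}(x)$ already converges to $y$, so no subsequence extraction is needed, but your extra bookkeeping is harmless.
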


\begin{proof} It is not difficult to prove that if $f$ is equicontinuous, then so is $f^n$ for each $n\in \N$. Thus, it suffices to show that $\Omega(x,f)\subseteq\omega(x,f)$. Let $y\in\Omega(x,f)$ and $\varepsilon>0$ be given. Since $f$ is equicontinuous, there is
  $\delta>0$ such that if $x,y\in X$ and $d(x,y)<\delta$, then $d(f^n(x),f^n(y))<\varepsilon$ for all $n\in\mathbb N$. From definition of $\Omega(x,f)$, there are sequences $(n_k)_{k\in\mathbb N}$ in $\mathbb N$ and $(x_k)_{k\in\mathbb N}$ in $X$ satisfying $\lim_{k\to\infty}x_k= x$ and $\lim_{k\to\infty}f^{n_k}(x_k)= y$. So, there exists $k_0\in\mathbb N$ such that
  $d(x_k,x)<\delta$ for all $k\geq k_0$. Thus, $d(f^{n_k}(x_k),f^{n_k}(x))<\varepsilon$ for all $k\geq k_0$. Whence, $\lim_{k\to\infty}f^{n_k}(x)= y$, that is, $y\in\omega(x,f)$.
\end{proof}

The proof of the following result is the same as \cite[Theorem 1.2]{Bruckner} where it was shown for the interval.

\begin{theorem}\label{theo946t}
Let $X$ be a compact metric space and  $f\colon X\to X$ be a map. If $f$ is equicontinuous, then $\omega_f$ is continuous.
\end{theorem}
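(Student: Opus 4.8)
My plan is to fix $x \in X$ and $\varepsilon > 0$, and produce $\delta > 0$ so that $\mathcal H(\omega(y,f), \omega(x,f)) < \varepsilon$ whenever $d(x,y) < \delta$. The natural tool is the uniform $\delta$ coming from equicontinuity: choose $\delta > 0$ so that $d(u,v) < \delta$ implies $d(f^n(u), f^n(v)) < \varepsilon/2$ for all $n \in \N$. I expect both inclusions $\omega(y,f) \subseteq \mathcal V(\omega(x,f), \varepsilon)$ and $\omega(x,f) \subseteq \mathcal V(\omega(y,f), \varepsilon)$ to follow from essentially the same argument, which I describe for the first.

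First, suppose $d(x,y) < \delta$ and let $z \in \omega(y,f)$; pick an increasing sequence $(n_i)$ with $f^{n_i}(y) \to z$. For each $i$ we have $d(f^{n_i}(x), f^{n_i}(y)) < \varepsilon/2$. Now I would pass to a convergent subsequence of $(f^{n_i}(x))_i$, say $f^{n_{i_k}}(x) \to w$; then $w \in \omega(x,f)$, and from $d(f^{n_{i_k}}(x), f^{n_{i_k}}(y)) < \varepsilon/2$ together with $f^{n_{i_k}}(y) \to z$ we get $d(w, z) \le \varepsilon/2 < \varepsilon$, so $z \in \mathcal V(\omega(x,f), \varepsilon)$. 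The symmetric argument, starting from $z \in \omega(x,f)$ and passing to a convergent subsequence of the corresponding $(f^{n_i}(y))_i$, gives $\omega(x,f) \subseteq \mathcal V(\omega(y,f), \varepsilon)$. Combining, $\mathcal H(\omega(x,f), \omega(y,f)) \le \varepsilon/2 < \varepsilon$, which is exactly continuity of $\omega_f$ at $x$.

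The only mildly delicate point — and really the ``main obstacle,'' though it is routine — is that a witnessing sequence $(n_i)$ for $z$ on the $y$-side need not witness anything on the $x$-side without extracting a subsequence, since $(f^{n_i}(x))_i$ need not converge; this is handled by compactness of $X$ as above. One should also note at the outset that the single uniform $\delta$ works simultaneously for all $n$, which is precisely the content of equicontinuity, and that no iterate-by-iterate bookkeeping is needed. Since the argument is word for word the one in \cite[Theorem 1.2]{Bruckner} for $X = [0,1]$, with the interval metric replaced by $d$, I would simply record the statement and refer to that proof, or include the two-inclusion argument above verbatim if a self-contained treatment is preferred.
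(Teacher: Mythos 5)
Your argument is correct and is essentially the paper's own proof: both use the uniform $\delta$ from equicontinuity to compare $f^{n_i}(x)$ with $f^{n_i}(y)$ along a witnessing sequence, then pass to a convergent subsequence by compactness to land in the other $\omega$-limit set, yielding both inclusions and hence a small Hausdorff distance (the paper uses $\varepsilon/3$ where you use $\varepsilon/2$, an immaterial difference). No gaps.
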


\begin{proof}
Let $\epsilon >0$. Since $f$ is equicontinuous, there exists $\delta>0$ such that $d(f^n(x),f^n(y))<\epsilon/3$ for each $n\in\N$, whenever $d(x,y)<\delta$. Let $x,y\in X$ be such that $d(x,y)<\delta$ and $x_0\in \omega(x,f)$. There exists $(n_k)_{k\in\N}$ in $\N$ such that $d(f^{n_k}(x),x_0)<\frac{\epsilon}{3}$, for each $k\in\N$. Then $$d(f^{n_k}(y),x_0)\leq d(f^{n_k}(y),f^{n_k}(x))+d(f^{n_k}(x),x_0)<\frac{2}{3}\epsilon.$$ So, by passing to a convergent subsequence, we see that there is $y_0\in \omega(y,f)$ such that $d(x_0,y_0)<\epsilon$. Hence, $\omega(x,f)\subseteq \mathcal{V}(\omega(y,f),\epsilon)$. Similarly, we prove that $\omega(y,f)\subseteq \mathcal{V}(\omega(x,f),\epsilon)$. Thus, $\mathcal{H}(\omega(x,f),\omega(y,f))<\epsilon$ whenever $d(x,y)<\delta$. Therefore, $\omega_f$ is continuous.
\end{proof}

For dendrites we shall see (Theorem \ref{MainTheorem}) that  $f$ is equicontinuous if, and only if, $\omega(x,f)=\Omega(x,f)$ for each $x\in X$. So, it is natural to ask the following. 

\begin{question}
Let $X$ be a compact metric space and  $f\colon X\to X$ be a map. Suppose $\omega(x,f)=\Omega(x,f)$ for each $x\in X$, is $\omega_f$ continuous?
\end{question}

We have a partial answers.

\begin{proposition}
\label{Prop857u}
Let $X$ be a metric compact space, and let $f\colon X\to X$ be a map. If $\omega(x,f)=\Omega(x,f)$ for each $x\in X$, then $\omega_f$ is usc.
\end{proposition}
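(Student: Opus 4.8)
The plan is to verify the usual sequential reformulation of upper semicontinuity by contradiction. Since $\omega(x,f)$ is compact for every $x$ (Proposition \ref{prop01}), $\omega_f$ is usc at $x$ if and only if for each $\epsilon>0$ there is $\delta>0$ such that $d(x,y)<\delta$ implies $\omega(y,f)\subseteq\mathcal V(\omega(x,f),\epsilon)$. So suppose $\omega_f$ is \emph{not} usc at some $x\in X$. Then there are $\epsilon>0$ and a sequence $(x_k)_{k\in\N}$ converging to $x$ such that $\omega(x_k,f)\not\subseteq\mathcal V(\omega(x,f),\epsilon)$ for every $k$; choose, using Proposition \ref{prop01}, a point $y_k\in\omega(x_k,f)$ with $d\bigl(y_k,\omega(x,f)\bigr)\geq\epsilon$.

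By compactness of $X$ I would pass to a subsequence so that $y_k\to y$ for some $y\in X$; then $d\bigl(y,\omega(x,f)\bigr)\geq\epsilon$, and in particular $y\notin\omega(x,f)$. The heart of the argument is to show, on the contrary, that $y\in\Omega(x,f)$. Since $y_k\in\omega(x_k,f)$, for each $k$ there is an increasing sequence $(n^{(k)}_i)_{i\in\N}$ with $f^{n^{(k)}_i}(x_k)\to y_k$ as $i\to\infty$. Because each such sequence of exponents is infinite, I can choose inductively on $k$ an index $n_k:=n^{(k)}_{i(k)}$ with $n_k>n_{k-1}$ and $d\bigl(f^{n_k}(x_k),y_k\bigr)<1/k$. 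Then $(n_k)_{k\in\N}$ is increasing, $x_k\to x$, and
\[
d\bigl(f^{n_k}(x_k),y\bigr)\le d\bigl(f^{n_k}(x_k),y_k\bigr)+d(y_k,y)\longrightarrow 0,
\]
so $f^{n_k}(x_k)\to y$. By the definition of $\Omega$, this yields $y\in\Omega(x,f)$.

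Finally, the hypothesis $\Omega(x,f)=\omega(x,f)$ forces $y\in\omega(x,f)$, contradicting $d\bigl(y,\omega(x,f)\bigr)\geq\epsilon>0$. Hence $\omega_f$ is usc at every point. I do not expect a genuine obstacle here; the only step needing a little care is the simultaneous selection of the exponents $n_k$ so that the resulting sequence is strictly increasing, as the definition of $\Omega(x,f)$ demands, and this is exactly what the inductive choice above arranges. Everything else is a routine compactness argument.
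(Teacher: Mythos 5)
Your proposal is correct and follows essentially the same route as the paper's proof: a contradiction argument in which one extracts limit points $y_k\to y$ outside $\omega(x,f)$ and then makes a diagonal choice of strictly increasing exponents $n_k$ so that $f^{n_k}(x_k)\to y$, concluding $y\in\Omega(x,f)=\omega(x,f)$, a contradiction. The only cosmetic difference is that you phrase upper semicontinuity via $\epsilon$-neighborhoods of the (compact) values rather than via open sets, and you spell out the diagonal selection that the paper leaves as ``not difficult to show.''
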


\begin{proof}
Let $x_0\in X$ and $V$ be an open subset of $X$ such that $\omega_f(x_0)\subseteq V$. Suppose that there exists a sequence $(x_n)_n\subseteq X$ such that $\lim_{n\to\infty}x_n=x_0$ and $\omega_f(x_n)\cap (X\setminus V)\neq\emptyset$, for each $n\in\N$. Let $z_n\in \omega_f(x_n)\cap (X\setminus V)$. Since $X$ is compact, without loss of generality we may suppose that $\lim_{n\to\infty}z_n=z_0$ for some $z_0\in X\setminus V$. For each $n\in\N$, let $(k_i^n)_i\subseteq \N$ such that $\lim_{i\to\infty}f^{k_i^n}(x_n)=z_n$. Since $\lim_{n\to\infty}z_n=z_0$, it is not difficult to show that there exist $(n_k)_k, (l_k)_k\subseteq \N$  with $(l_k)_k$ increasing such that $\lim_{k\to\infty}f^{l_k}(x_{n_k})=z_0$. Since $\lim_{n\to\infty}x_n=x_0$, then $z_0\in\Omega(x_0,f)=\omega(x_0,f)$, thus $z_0\in V$, a contradiction. Therefore, $\omega_f$ is usc.
\end{proof}

\begin{proposition}
Let $X$ be a metric compact space, and let $f\colon X\to X$ be a map. If $X=\mathrm{Per}(f)$, then $\omega_f$ is lsc.
\end{proposition}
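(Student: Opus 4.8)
The plan is to verify the definition of lower semicontinuity directly, exploiting that under the hypothesis $X=\mathrm{Per}(f)$ every $\omega$-limit set is simply a finite periodic orbit. Recall that $\omega_f$ is lsc at a point $x_0\in X$ provided that for every open set $V\subseteq X$ with $\omega_f(x_0)\cap V\neq\emptyset$ there is a neighborhood $U$ of $x_0$ such that $\omega_f(x)\cap V\neq\emptyset$ for all $x\in U$. It is enough to check this at an arbitrary $x_0$.

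First I would fix $x_0\in X$ and an open set $V$ with $\omega_f(x_0)\cap V\neq\emptyset$, and pick $y\in\omega_f(x_0)\cap V$. Since $x_0\in\mathrm{Per}(f)$, its orbit $\mathcal O(x_0,f)$ is finite and $\omega(x_0,f)=\mathcal O(x_0,f)$, because each point of the orbit occurs infinitely often along the sequence $(f^n(x_0))_{n\in\N}$. Hence $y=f^{j}(x_0)$ for some $j\in\N$.

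Next, since $f^{j}$ is continuous and $f^{j}(x_0)=y\in V$ with $V$ open, there is an open neighborhood $U$ of $x_0$ with $f^{j}(U)\subseteq V$. Now take any $x\in U$. Because $X=\mathrm{Per}(f)$, the point $x$ is periodic, so again $\omega(x,f)=\mathcal O(x,f)$, and in particular $f^{j}(x)\in\omega(x,f)$. Since also $f^{j}(x)\in V$, we conclude $\omega_f(x)\cap V\neq\emptyset$. As $x_0$ and $V$ were arbitrary, $\omega_f$ is lsc.

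I do not expect any genuine obstacle here: the only point worth stating carefully is that periodicity of $x$ forces $f^{j}(x)\in\omega(x,f)$ for every $j\in\N$, after which continuity of the single iterate $f^{j}$ finishes the argument. (In contrast with the usc statement of Proposition \ref{Prop857u}, no hypothesis relating $\omega$ and $\Omega$ is needed for this direction.)
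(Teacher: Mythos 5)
Your argument is correct and is essentially the paper's proof: both reduce to writing the chosen point of $\omega_f(x_0)\cap V$ as $f^m(x_0)$ using periodicity of $x_0$, apply continuity of the single iterate $f^m$, and then use periodicity of nearby points $x$ to conclude $f^m(x)\in\omega_f(x)\cap V$. The only difference is cosmetic: the paper verifies lower semicontinuity along an arbitrary convergent sequence, while you use the neighborhood formulation.
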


\begin{proof}
Let $(x_n)_n\subseteq X$ be such that $\lim_{n\to\infty}x_n=x_0$. Let $V$ be an open subset of $X$ such that $V\cap \omega_f(x_0)\neq\emptyset$. Let $z\in V\cap \omega_f(x_0)$. Since $x_0\in\mathrm{Per}(f)$, there is $m\in\N$ such that $f^m(x_0)=z$.  Since $\lim_{n\to\infty}f^m(x_n)=f^m(x_0)=z$, then there exists a positive integer $k$ such that $f^m(x_i)\in V$, for each $i\geq k$. Since $x_i\in \mathrm{Per}(f)$, $f^m(x_i)\in\omega_f(x_i)$ for each $i\in\N$. Thus, $\omega_f(x_i)\cap V\neq\emptyset$, for each $i\geq k$. Therefore, $\omega_f$ is lsc.
\end{proof}

From the results above we immediately get the following. 

\begin{theorem}
Let $X$ be a metric compact space and   $f\colon X\to X$ be a map. Suppose  $X=\mathrm{Per}(f)$. The following are equivalent:

\begin{itemize}
\item[(i)] 
 $\Omega(x,f)=\omega(x,f)$, for each $x\in X$.
 \item[(ii)]  $\omega_f$ is continuous.
 \item[(iii)]  $\omega_f$ is usc.
\end{itemize}
\end{theorem}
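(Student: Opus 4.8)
The plan is to establish the cycle $(i)\Rightarrow(iii)\Rightarrow(ii)\Rightarrow(i)$, so that the whole argument reduces to reusing the three results immediately preceding the statement plus one elementary remark about the hypothesis $X=\mathrm{Per}(f)$. The implication $(i)\Rightarrow(iii)$ needs no work at all: it is exactly Proposition \ref{Prop857u}, and it does not even use $X=\mathrm{Per}(f)$.

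For $(iii)\Rightarrow(ii)$ I would recall the standard fact that a set-valued map into $2^X$ (with the Hausdorff metric) is continuous precisely when it is both upper and lower semicontinuous. Since $X=\mathrm{Per}(f)$, the (unnamed) proposition stated just before this theorem guarantees that $\omega_f$ is lsc unconditionally; hence once we also know $\omega_f$ is usc it is continuous. Note that $(ii)\Rightarrow(iii)$ is trivial, so under the standing hypothesis $(ii)$ and $(iii)$ are really the same condition.

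The implication $(ii)\Rightarrow(i)$ is where the hypothesis is genuinely used, and it is the only step requiring a new (but easy) observation: $X=\mathrm{Per}(f)$ forces $f$ to be surjective. Indeed, if $x\in X$ has period $k\ge 1$ then $x=f^{k}(x)=f\bigl(f^{k-1}(x)\bigr)\in f(X)$, so $f(X)=X$, and by induction $f^{n}(X)=X$ for every $n$. Consequently $\bigcap_{n\in\N}f^{n}(X)=X$; and since $X$ is compact, hence closed, and equals $\mathrm{Per}(f)$, we obtain $\overline{\mathrm{Per}(f)}=X=\bigcap_{n\in\N}f^{n}(X)$. Now both hypotheses of Lemma \ref{Prop3d3} hold, namely $\omega_f$ continuous and $\overline{\mathrm{Per}(f)}=\bigcap_{n\in\N}f^{n}(X)$, so that lemma yields $\omega(x,f)=\Omega(x,f)$ for every $x\in X$, which is $(i)$.

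I do not expect a real obstacle: once the surjectivity remark is in place, every step is a direct appeal to an already-established result. The only points that deserve a moment of care are invoking the hyperspace fact ``continuous $=$ usc $+$ lsc'' correctly for maps into $2^X$, and checking that the lsc proposition applies verbatim when $X=\mathrm{Per}(f)$ — which it does, since that is precisely its hypothesis.
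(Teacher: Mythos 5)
Your proposal is correct and follows essentially the same route the paper intends (it derives the theorem "immediately" from Proposition \ref{Prop857u}, the preceding lsc proposition, and Lemma \ref{Prop3d3}): $(i)\Rightarrow(iii)$ by Proposition \ref{Prop857u}, $(iii)\Rightarrow(ii)$ since $X=\mathrm{Per}(f)$ gives lsc and usc $+$ lsc $=$ continuity, and $(ii)\Rightarrow(i)$ via Lemma \ref{Prop3d3} after the (correct) observation that every point being periodic forces $f^n(X)=X$, hence $\overline{\mathrm{Per}(f)}=X=\bigcap_{n\in\N}f^n(X)$. Your explicit surjectivity remark is exactly the small verification the paper leaves implicit.
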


We will shall see in Theorem \ref{todos-peri} that for dendrites if every point is periodic, then $f$ is equicontinuous.

\section{Equicontinuity of maps on dendrites}

The main result on this section is Theorem \ref{MainTheorem} where we characterize the equicontinuity of maps defined in a dendrite. We need first to show several auxiliary results. The idea for the proof of the next proposition was taken from \cite[Lemma 2.3]{Sun}, they just proved that $\mathrm{Fix}(f)$ is connected.

\begin{lemma}\label{Prop2}
Let $X$ be a dendrite and $f\colon X\to X$ be a continuous map. If $\omega(x,f)=\Omega(x,f)$ for each $x\in X$, then $\mathrm{Fix}(f^m)$ is connected for all $m\in\mathbb N$. Moreover , $\mathrm{Per}(f)$ is also connected.
\end{lemma}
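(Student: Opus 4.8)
The plan is to first show $\mathrm{Fix}(f^m)$ is connected, and then deduce connectedness of $\mathrm{Per}(f)$ from this. Fix $m\in\mathbb N$ and set $g=f^m$; note that $\mathrm{Fix}(g)\neq\emptyset$ by Theorem \ref{fixpoint}, and that $\omega(x,g)=\Omega(x,g)$ for each $x\in X$ follows easily from the hypothesis on $f$ (a point witnessing membership in $\Omega(x,g)$ can be rescaled along a subsequence to witness membership in $\Omega(x,f)$, and conversely). So it suffices to prove: if $g\colon X\to X$ is a map on a dendrite with $\omega(x,g)=\Omega(x,g)$ for all $x$, then $\mathrm{Fix}(g)$ is connected.

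Arguing by contradiction, suppose $a,b\in\mathrm{Fix}(g)$ with $a\neq b$ and the arc $[a,b]$ not contained in $\mathrm{Fix}(g)$. Then there is $c\in(a,b)$ with $g(c)\neq c$. Consider the unique point $e\in[a,b]$ that is the "first point" at which $g([a,c])$ leaves $[a,b]$ — more precisely, following the idea of \cite[Lemma 2.3]{Sun}, one uses the retraction $r\colon X\to[a,b]$ (the first-point map onto the arc, which is continuous since $X$ is a dendrite) and looks at $r\circ g$ restricted to $[a,b]$. Since $r\circ g$ fixes both endpoints $a$ and $b$ of $[a,b]\cong[0,1]$, and $r(g(c))\neq c$ is impossible in general — here is where the real work lies: one wants to locate a point $x_0\in(a,b)$ near which $g$ pushes points away from $x_0$ on both sides, so that $x_0$ gets no preimages of nearby points under high iterates, contradicting $\omega(x_0,g)=\Omega(x_0,g)$. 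Concretely, I would find $x_0\in(a,b)$ and a small connected neighborhood $U$ of $x_0$ such that $g^n$ maps $U\setminus\{x_0\}$ away from $x_0$ uniformly, by exploiting that $r\circ g$ is an interval map fixing the endpoints and strictly moving $c$; a fixed point of $r\circ g$ of the appropriate "repelling" type (or an endpoint of a maximal interval of fixed points, which must be one-sided repelling in the $r\circ g$ picture) will serve as $x_0$. Then picking any $z\in\omega(x_0,g)$ (nonempty by Proposition \ref{prop01}), $z\in\Omega(x_0,g)$, so there are $x_i\to x_0$ and $n_i\nearrow\infty$ with $g^{n_i}(x_i)\to z$; but the $x_i$ eventually lie in $U$, and the forward orbit of $U\setminus\{x_0\}$ stays outside a fixed neighborhood of $x_0$, so $z\ne x_0$ — and moreover one can arrange that $z$ is forced to lie in a component of $X\setminus\{x_0\}$ that the orbit cannot reach, a contradiction. (One handles the case $x_i=x_0$ for infinitely many $i$ separately, using $g(x_0)=x_0$.)

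For the "moreover" part, once we know every $\mathrm{Fix}(f^m)$ is connected, observe $\mathrm{Per}(f)=\bigcup_{m\in\mathbb N}\mathrm{Fix}(f^m)$ and that $\mathrm{Fix}(f^m)\subseteq\mathrm{Fix}(f^{m!})\subseteq\mathrm{Fix}(f^{(m+1)!})$, so $\mathrm{Per}(f)=\bigcup_{m}\mathrm{Fix}(f^{m!})$ is an increasing union of connected sets, each containing $\mathrm{Fix}(f)\ne\emptyset$; hence $\mathrm{Per}(f)$ is connected (a nested union of connected sets with nonempty total intersection is connected).

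The main obstacle I anticipate is making precise the "$x_0$ is repelling on both sides" step: the projection $r\circ g$ on the arc $[a,b]$ only records the behavior of $g$ after retracting, so a fixed point of $r\circ g$ need not be a fixed point of $g$, and even when it is, ruling out that nearby points return close to $x_0$ under some high iterate of $g$ (rather than of $r\circ g$) requires care — one must use that $X$ is a dendrite so that $X\setminus\{x_0\}$ has well-controlled components, and that the retraction interacts well with the arc structure, to guarantee the orbit genuinely escapes. This is exactly the point where the dendrite structure (Theorem \ref{theo00}, Theorem \ref{prop00}) and the continuity of the first-point retraction must be invoked carefully.
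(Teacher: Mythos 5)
There is a genuine gap, on two counts. First, your opening reduction — that $\omega(x,f^m)=\Omega(x,f^m)$ for all $x$ "follows easily" from $\omega(x,f)=\Omega(x,f)$ — is not justified. The easy direction only gives $\Omega(x,f^m)\subseteq\Omega(x,f)=\omega(x,f)$, and $\omega(x,f)=\bigcup_{j=0}^{m-1}\omega(f^j(x),f^m)$, so membership in $\omega(x,f)$ does not put a point in $\omega(x,f^m)$; indeed, in the paper this statement for iterates is only obtained as a corollary of the main theorem, after equicontinuity is established, so assuming it at this stage is close to circular. Second, and more seriously, the heart of your argument — producing a fixed point $x_0$ of $g=f^m$ that is "repelling on both sides" and showing the genuine $g$-orbit of nearby points escapes a neighborhood of $x_0$ — is exactly the step you leave as an acknowledged obstacle, and it is not clear it can be carried out: the interval map $r\circ g$ can be completely blind to the fact that $g(c)\neq c$ (if $g(c)$ lies in a component of $X\setminus[a,b]$ attached at $c$, then $r(g(c))=c$, so $r\circ g$ may even be the identity on $[a,b]$ while $g$ is not), so there need be no non-fixed point of $r\circ g$, hence no repelling point to locate; and even when one exists, controlling returns of the actual $g$-orbit (rather than the retracted one) is not addressed. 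As written, no contradiction is actually derived.

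For comparison, the paper's proof avoids both problems by a pullback argument that needs the hypothesis only for $f$ and only at one point. Given $a,b\in\mathrm{Fix}(f^m)$ and $c\in[a,b]$, either $c\in[a,f^m(c)]$ or $c\in[b,f^m(c)]$; say the former. Then $[a,f^m(c)]\subseteq f^m([a,c])$ yields $x_1\in[a,c]$ with $f^m(x_1)=c$, and inductively $x_{i+1}\in[a,x_i]$ with $f^m(x_{i+1})=x_i$. The points $x_i$ are monotone along the arc, hence converge to some $y_0$ with $f^m(y_0)=y_0$, and since $(f^m)^n(x_n)=c$ for all $n$, one gets $c\in\Omega(y_0,f^m)\subseteq\Omega(y_0,f)=\omega(y_0,f)=\mathcal{O}(y_0,f)$, which forces $f^m(c)=c$. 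Thus $[a,b]\subseteq\mathrm{Fix}(f^m)$. Your "moreover" step (an increasing union of connected sets $\mathrm{Fix}(f^{m!})$ all containing the nonempty set $\mathrm{Fix}(f)$) is correct and matches the paper's conclusion.
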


\begin{proof}
	Let $a,b\in \mathrm{Fix}(f^m)$, i.e. $f^m(a)=a$ and $f^m(b)=b$. We shall prove that $[a,b]\subseteq \mathrm{Fix}(f^m)$. Let $c\in [a,b]$ be given. Since $X$ is a dendrite, we have that either $c\in [a,f^m(c)]$ or $c\in [b,f^m(c)]$. Assume that $c\in [a,f^m(c)]$. Note that $[a,f^m(c)]\subseteq f^m([a,c])$. Thus, there exists $x_1\in [a,c]$ such that $f^m(x_1)=c$. Now, $[a,c]\subseteq f^m([a,x_1])$. Then there exists $x_2\in [a,x_1]$ such that $f^m(x_2)=x_1$. Proceeding inductively we construct a sequence $(x_n)_{n\in\N}$ such that $f^m(x_{i+1})=x_i$ and $[a,x_{i+1}]\subseteq [a,x_i]$, for each $i\in\N$. Therefore, $(x_n)_{n\in\N}$ is a convergent sequence; say $\lim_{n\to\infty}x_n=y_0$ for some $y_0\in [a,b]$. Clearly $f^{m}(y_0)=y_0$. Since $(f^m)^n(x_n)=c$ for each $n\in\N$, we have $c\in \Omega(y_0,f^m)\subseteq \Omega(y_0,f)=\omega(y_0,f)$. So, we infer from the fact $f^m(y_0)=y_0$ that  $f^m(c)=c$. Therefore, $[a,b]\subseteq \mathrm{Fix}(f^m)$ and $\mathrm{Fix}(f^m)$ is connected. Finally, connectedness of $\mathrm{Per}(f)$ follows immediately since $\mathrm{Per}(f)=\bigcup_{m\in\mathbb N}\mathrm{Fix}(f^m)$ and $\mathrm{Fix}(f)\subseteq \mathrm{Fix}(f^m)$ for all $m\in\mathbb N$.
	\end{proof}

The idea of the following result was taken from the proof of \cite[Lemma 2.7]{Sun2}.

\begin{lemma}\label{LemEq}
	Let $X$ be a dendrite and let $f\colon X\to X$ be a continuous map. If $\Omega(x,f)$ is totally disconnected for each $x\in X$, then $f$ is equicontinuous.
\end{lemma}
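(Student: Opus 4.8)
The plan is to argue by contrapositive: assuming $f$ is not equicontinuous, I will produce a point $x\in X$ whose set $\Omega(x,f)$ contains a nondegenerate subcontinuum, contradicting total disconnectedness. First I would unpack the failure of equicontinuity: there is an $\varepsilon_0>0$, a point $p\in X$, a sequence $p_k\to p$, and iterates $n_k$ with $d(f^{n_k}(p_k),f^{n_k}(p))\ge\varepsilon_0$. Passing to subsequences (using compactness of $X$) I may assume $f^{n_k}(p_k)\to u$ and $f^{n_k}(p)\to v$ with $d(u,v)\ge\varepsilon_0$, so $u,v\in\Omega(p,f)$ are distinct. The heart of the matter is to promote this pair of distinct points to a whole arc sitting inside $\Omega(p,f)$.

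To do that I would exploit the tree structure of the dendrite together with the arcwise-connectedness and the fact that for dendrites the image $f([a,b])$ contains the arc $[f(a),f(b)]$. Consider the arc $[f^{n_k}(p),f^{n_k}(p_k)]$; its Hausdorff limit (along a further subsequence) is a subcontinuum $K\subseteq X$ joining $v$ to $u$, hence containing the arc $[v,u]$, which is nondegenerate. The claim I want is $K\subseteq\Omega(p,f)$, or at least $[v,u]\subseteq\Omega(p,f)$. For a fixed $z\in[v,u]$ I need a sequence $q_i\to p$ and exponents $m_i\to\infty$ with $f^{m_i}(q_i)\to z$. The natural candidate is to take, for each large $k$, a point $q$ on the arc $[p,p_k]$: since $f^{n_k}$ maps $[p,p_k]$ onto a continuum containing $[f^{n_k}(p),f^{n_k}(p_k)]$, there is $q_k\in[p,p_k]$ with $f^{n_k}(q_k)$ within $1/k$ of the point of $[f^{n_k}(p),f^{n_k}(p_k)]$ nearest to $z$; and because $[f^{n_k}(p),f^{n_k}(p_k)]\to[v,u]\ni z$ in the Hausdorff metric, that nearest point tends to $z$. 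Since $q_k\in[p,p_k]$ and $p_k\to p$, arcwise continuity in the dendrite forces $q_k\to p$, giving $z\in\Omega(p,f)$. Thus $[v,u]\subseteq\Omega(p,f)$, contradicting the hypothesis that $\Omega(p,f)$ is totally disconnected.

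The main obstacle I anticipate is the control of $f^{n_k}$ restricted to the arc $[p,p_k]$: I need that $f^{n_k}([p,p_k])$ really does contain the arc $[f^{n_k}(p),f^{n_k}(p_k)]$ (true in dendrites since continuous images of subcontinua are subcontinua and subcontinua of dendrites are arcwise connected and contain the arc between any two of their points), and I need that the relevant arcs converge to $[u,v]$ rather than to some larger wild continuum — for this it suffices to pass to a subsequence making the sequence of arcs Hausdorff-convergent and to note that any subcontinuum containing $u$ and $v$ contains $[u,v]$, which is enough since I only need the inclusion $[u,v]\subseteq\Omega(p,f)$. A secondary technical point is the diagonalization merging the two index parameters $k$ and the point chosen on each arc, but this is the same standard argument already used in the proof of Proposition~\ref{Prop857u}, so I would invoke that style of reasoning rather than repeat it. Once $[u,v]\subseteq\Omega(p,f)$ is established, the contradiction is immediate and the lemma follows.
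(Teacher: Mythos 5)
Your proposal is correct and follows essentially the same route as the paper's proof: argue by contrapositive, get two distinct limit points $u,v\in\Omega(p,f)$, and pull back points of the arcs $[f^{n_k}(p),f^{n_k}(p_k)]$ through the inclusion $[f^{n_k}(p),f^{n_k}(p_k)]\subseteq f^{n_k}([p,p_k])$, using that points of $[p,p_k]$ converge to $p$, to conclude $[u,v]\subseteq\Omega(p,f)$. The only cosmetic difference is that the paper notes each fixed $c\in(u,v)$ lies exactly on the arc $[f^{n_k}(p),f^{n_k}(p_k)]$ for large $k$ (so no Hausdorff-limit or $1/k$-approximation step is needed), whereas you reach the same conclusion via Hausdorff convergence of the arcs and a diagonal argument.
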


\begin{proof}
	Suppose that $f$ is not equicontinuous, i.e., there exist $x\in X$ and sequences $(x_k)_{k\in\N}$ in $X$ and $(n_k)_{n\in\N}$ in $\N$, such that $\lim_{k\to\infty}x_k=x$, $\lim_{k\to\infty}f^{n_k}(x_k)=a$ and $\lim_{k\to\infty}f^{n_k}(x)=b$, for some $a,b\in X$ with  $a\neq b$. Note that $a\in \Omega(x,f)$ and $b\in\omega(x,f)\subseteq \Omega(x,f)$. We show that $[a,b]\subseteq \Omega(x,f)$. Let $c\in (a,b)$ be given. Since $X$ is a dendrite, we have:
	\begin{enumerate}
	\item there exists $k_0\in\N$ such that $c\in [f^{n_k}(x_k),f^{n_k}(x)]$, for each $k\geq k_0$, and
	\item $\lim_{n\to\infty}t_n=x$, for every sequence $(t_n)_{n\in\N}$, where $t_n\in [x_n,x]$ for each $n\in\N$.
	\end{enumerate}
Observe that $[f^{n_k}(x_k),f^{n_k}(x)]\subseteq f^{n_k}([x_k,x])$ for each $k\in\mathbb N$. Hence, there exists $z_k\in [x_k,x]$ such that $f^{n_k}(z_k)=c$, for each $k\geq k_0$. In view of $\lim_{k\to\infty}z_k=x$ we deduce that $c\in\Omega(x,f)$. Thus, $[a,b]\subseteq \Omega(x,f)$. Therefore, $\Omega(x,f)$ is not totally disconnected.
\end{proof}

\begin{lemma}\label{Lemeq0}
	Let $X$ be a dendrite and let $f\colon X\to X$ be an onto map. If $\overline{\mathrm{Per}(f)}=X$ and $\mathrm{Per}(f)$ is connected, then $f$ is a homeomorphism.
\end{lemma}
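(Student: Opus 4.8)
The plan is to show that $f$ is injective; since $X$ is compact and $f$ is a continuous onto self-map of $X$, injectivity immediately yields that $f$ is a homeomorphism. So I would argue by contradiction: suppose $f(p)=f(q)$ with $p\neq q$. The key structural fact I want to exploit is that a dendrite is uniquely arcwise connected, so there is a well-defined arc $[p,q]$, and removing an interior point $c\in(p,q)$ disconnects $X$ into (at least) two pieces, one containing $p$ and one containing $q$. I would pick such a $c$ and consider the two subcontinua obtained as the closures of the components of $X\setminus\{c\}$ containing $p$ and $q$ respectively; call them $P$ and $Q$, with $P\cap Q=\{c\}$ and $P\cup Q$ containing the arc $[p,q]$.

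The heart of the argument should be a ``folding'' observation: because $f(p)=f(q)$, the image $f([p,q])$ is a subcontinuum that is ``pinched,'' and one can locate a point whose image already lies on the $f$-side, forcing $f$ to collapse a neighborhood in a way incompatible with density of periodic points. More precisely, I expect the clean route is: since $f$ is continuous and $f(p)=f(q)=:w$, consider the point $c\in(p,q)$; then $f(c)$ lies in $f([p,q])$, which is an arcwise connected set containing $w$ with ``both ends'' at $w$. I would then show that there is a nondegenerate subarc $I\subseteq[p,q]$ with $c$ in its interior such that $f(I)$ misses one of the two sides $P\setminus\{c\}$ or $Q\setminus\{c\}$ — intuitively, near a folding point the map pushes a whole neighborhood into one side of $X\setminus\{c\}$. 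Granting such an $I$, no point of $\Int(I)$ can be periodic: a periodic point $x$ satisfies $x=f^k(x)\in f(X)$, and iterating, if $x\in\Int(I)$ lies on the ``forbidden'' side then its orbit can never return to it. To make this precise I would track which component of $X\setminus\{c\}$ the forward orbit lives in, using that $f$ maps the small arc $I$ entirely to one side and that connectedness of $\mathrm{Per}(f)$ constrains how periodic points near $c$ can be arranged. This contradicts $\overline{\mathrm{Per}(f)}=X$, since $\Int(I)$ is a nonempty open subset of $X$ containing no periodic point.

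An alternative and perhaps more robust packaging: use that $\mathrm{Per}(f)$ is connected and dense, hence $\mathrm{Per}(f)$ is a dense connected subset of the dendrite $X$; combined with unique arcwise connectedness this forces every arc $[a,b]$ with $a,b\in\mathrm{Per}(f)$ to have all of its cut points ``approximable'' inside $\mathrm{Per}(f)$. Then, if $f(p)=f(q)$, apply this to an arc through the two preimage branches and derive that $f$ restricted to a neighborhood of the folding point is not locally injective, while local injectivity at a dense set of periodic points (each periodic point has a periodic orbit on which $f$ is a bijection) propagates by continuity to a contradiction.

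The main obstacle I anticipate is making the ``folding forces one-sidedness'' step rigorous: I must guarantee a genuinely nondegenerate arc $I$ around the chosen cut point whose image avoids an entire side of $X\setminus\{c\}$, and I must choose $c$ carefully (perhaps as a cut point of $X$ that is also a cut point separating $p$ from $q$ with $f(c)$ not equal to $c$, or handle the case $f(c)=c$ separately). Controlling the forward orbit's side and invoking connectedness of $\mathrm{Per}(f)$ to rule out periodic points in $\Int(I)$ is the delicate bookkeeping; once that is in place, the contradiction with $\overline{\mathrm{Per}(f)}=X$ and hence the conclusion that $f$ is a homeomorphism are immediate.
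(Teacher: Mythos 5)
Your overall strategy (prove injectivity, then conclude that a continuous bijection of a compact space is a homeomorphism) is the same as the paper's, but the step you yourself flag as delicate is exactly where the argument breaks, and the bookkeeping you hope for does not go through as described. Knowing that $f(I)$ misses one component of $X\setminus\{c\}$ gives no control over $f^2(I), f^3(I),\dots$; the forward orbit of a point of $\mathrm{Int}(I)$ may leave and later re-enter $I$, so the claim that a periodic point in $\mathrm{Int}(I)$ ``can never return'' is unjustified. Likewise, in your alternative packaging, injectivity (or ``local injectivity'') at a dense set of periodic points does not propagate by continuity: a continuous map can be injective on a dense set and still fold. The missing idea is the structural consequence of the two hypotheses that the paper extracts first: since $\mathrm{Per}(f)$ is connected and dense, every cut point of $X$ lies in $\mathrm{Per}(f)$ --- if $x\notin\mathrm{Per}(f)$ were a cut point, then $\mathrm{Per}(f)$, being connected and contained in $X\setminus\{x\}$, would lie in one of the two open pieces, and density would force the other piece to be empty --- hence $X\setminus\mathrm{Per}(f)\subseteq\mathrm{End}(X)$.

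With that fact the paper finishes quite differently from your orbit-tracking plan, and without ever choosing a ``one-sided'' arc. If $f(x_0)=f(y_0)$ with $x_0\neq y_0$, every point of the open arc $(x_0,y_0)$ is a cut point of $X$, hence periodic, so any subarc $\alpha\subseteq(x_0,y_0)$ lies in $\mathrm{Per}(f)$; since $f$ restricted to $\mathrm{Per}(f)$ is injective (a periodic point is recovered from its image by further iteration), $f|_{\alpha}\colon\alpha\to f(\alpha)$ is a homeomorphism. On the other hand, $f([x_0,y_0])$ is a subcontinuum with $f(x_0)=f(y_0)$, so one can pick a non-cut point $w_0\neq f(x_0)$ of $f([x_0,y_0])$ and a point $z_0\in(x_0,y_0)$ with $f(z_0)=w_0$; choosing $\alpha$ to contain $z_0$ in its interior, $\alpha\setminus\{z_0\}$ is disconnected while $f(\alpha)\setminus\{w_0\}$ is connected, contradicting that $f|_{\alpha}$ is a homeomorphism. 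I suggest you replace the one-sidedness and orbit-return argument by this route: first prove $X\setminus\mathrm{Per}(f)\subseteq\mathrm{End}(X)$ and injectivity of $f$ on $\mathrm{Per}(f)$, then derive the contradiction from the cut point versus non-cut point mismatch at the fold.
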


\begin{proof}
         We show first that $X\setminus\mathrm{Per}(f)\subseteq \mathrm{End}(X)$. Note that if $x\in X\setminus (\mathrm{End}(X)\cap \mathrm{Per}(f))$, then $X\setminus\{x\}$ is disconnected, by Theorem \ref{theo00}. Hence, $X\setminus\{x\}=U\cup V$, where $U$ and $V$ are nonempty disjoint open subsets of $X$. Since $\mathrm{Per}(f)$ is connected and $\mathrm{Per}(f)\subseteq X\setminus\{x\}$, we have either $\mathrm{Per}(f)\subseteq U$ or $\mathrm{Per}(f)\subseteq V$. Furthermore, as $\mathrm{Per}(f)$ is dense, then  either $U=\emptyset$ or $V=\emptyset$; a contradiction. Therefore, $X\setminus\mathrm{Per}(f)\subseteq \mathrm{End}(X)$.
	
Now we prove that $f$ is injective. Suppose that there exist $x_0,y_0\in X$ such that $x_0\neq y_0$ and $f(x_0)=f(y_0)$. Observe that $f([x_0,y_0])$ is a subcontinuum of $X$. By \cite[Theorem 6.6]{Nadler} there are at least two non-cut points in $f([x_0,y_0])$. Thus, let $w_0\in f([x_0,y_0])$ be a non-cut point such that $w_0\neq f(x_0)$ and take $z_0\in (x_0,y_0)$ satisfying $f(z_0)=w_0$. Let $\alpha$ be an arc such that $\alpha\setminus \{z_0\}$ is disconnected and $\alpha\subseteq (x_0,y_0)$. Note that $\alpha\cap \mathrm{End}(X)=\emptyset$, thus  $\alpha\subseteq \mathrm{Per}(f)$. It is not difficult to see that $f|_{\mathrm{Per}(f)}$ is injective. Thus, $f|_{\alpha}\colon \alpha\to f(\alpha)$ is a homeomorphism. But this contradicts the fact that $\alpha\setminus\{z_0\}$ is disconnected and $f(\alpha)\setminus\{f(z_0)\}$ is connected. Therefore, $f$ is injective and so, $f$ a homeomorphism.
\end{proof}

\begin{question}
Let $X$ be a dendrite and $f\colon X\to X$ be an onto map such that  $\mathrm{Per}(f)$ is connected and dense in $X$. Does it follow that $\omega_f$ is continuous?
\end{question}

We need the following result which is a consequence of \cite[Lemma 2.6]{Sun}.

\begin{lemma}
\label{minimal}
Let $X$ be a dendrite and let $f\colon X\to X$ be a map. If $\omega(x,f)=\Omega(x,f)$ for each $x\in X$, then for all $x\in X$, we have $\omega(y,f)=\omega(x,f)$ for all $y\in \omega(x,f)$.
\end{lemma}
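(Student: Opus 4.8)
The plan is to show that for any $x\in X$ the set $\omega(x,f)$ is a minimal set, i.e. that $\omega(y,f)=\omega(x,f)$ whenever $y\in\omega(x,f)$. Fix $x$ and put $\ideal=\omega(x,f)$. By Proposition \ref{prop01}, $\ideal$ is a nonempty compact set with $f(\ideal)=\ideal$; hence $f$ restricts to a surjection $f|_{\ideal}\colon\ideal\to\ideal$, and $\ideal$, being a closed subset of the dendrite $X$, is itself compact (though not necessarily connected). The hypothesis $\omega(z,f)=\Omega(z,f)$ holds for \emph{every} $z\in X$, so in particular it holds for every $z\in\ideal$, and moreover $\omega(z,f|_{\ideal})=\omega(z,f)$ and $\Omega(z,f|_{\ideal})\subseteq\Omega(z,f)$ for $z\in\ideal$ since all relevant orbits stay inside the invariant set $\ideal$. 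Thus the restricted system $(\ideal,f|_{\ideal})$ again satisfies the hypothesis $\omega=\Omega$.

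The core of the argument is the cited consequence of \cite[Lemma 2.6]{Sun}: in a dendrite, the condition $\omega(z,f)=\Omega(z,f)$ for all $z$ forces the $\omega$-limit sets to be ``locally constant'' in a strong sense, and in fact $\omega(x,f)$ contains no proper nonempty closed invariant subset once one restricts attention to points of $\omega(x,f)$ themselves. Concretely, I would first take $y\in\omega(x,f)$ and observe $\omega(y,f)\subseteq\omega(x,f)$, which is immediate since $\omega(x,f)$ is closed and $f$-invariant and contains the orbit of $y$. For the reverse inclusion, suppose $z\in\omega(x,f)\setminus\omega(y,f)$. Using density of the orbit of $x$ in $\omega(x,f)$ (more precisely, the recurrence-type structure guaranteed by Lemma \ref{minimal}'s source) together with the equality $\omega=\Omega$, one produces a point $z'$ near $z$ whose backward structure connects it to $y$; the point is that $\Omega(y,f)$ must ``see'' all of $\omega(x,f)$ because arbitrarily small perturbations of $y$ lie on the orbit of $x$ (they are of the form $f^{n_i}(x)$ for large $n_i$, hence close to $y$), and these orbit pieces sweep out all of $\omega(x,f)$ under further iteration. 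Since $\Omega(y,f)=\omega(y,f)$, this yields $z\in\omega(y,f)$, a contradiction.

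More carefully, here is the mechanism I would make precise. Because $y\in\omega(x,f)$, choose $n_i\uparrow\infty$ with $f^{n_i}(x)\to y$; set $y_i=f^{n_i}(x)$, so $y_i\to y$. Now take any $z\in\omega(x,f)$; choose $m_j\uparrow\infty$ with $f^{m_j}(x)\to z$. For each $j$ pick $i(j)$ large enough that $n_{i(j)}<m_j$ and $y_{i(j)}$ is within $1/j$ of $y$; then $f^{m_j-n_{i(j)}}(y_{i(j)})=f^{m_j}(x)\to z$, and $y_{i(j)}\to y$. By definition this says $z\in\Omega(y,f)$. Since $\Omega(y,f)=\omega(y,f)$ by hypothesis, $z\in\omega(y,f)$. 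As $z\in\omega(x,f)$ was arbitrary, $\omega(x,f)\subseteq\omega(y,f)$, and combined with the reverse inclusion we get $\omega(y,f)=\omega(x,f)$.

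I expect the only subtlety — and the reason the statement is phrased as a ``consequence of \cite[Lemma 2.6]{Sun}'' rather than proved from scratch — is whether the ordering $n_{i(j)}<m_j$ can always be arranged with $y_{i(j)}$ still close to $y$; but this is automatic since $n_i\to\infty$ lets us always find an index $i(j)$ with $n_{i(j)}$ as large as we please while keeping $m_j-n_{i(j)}\geq 1$, which is possible as soon as $m_j$ itself is large (and we may pass to a subsequence of $(m_j)$ to ensure $m_j\to\infty$ fast enough). Notably this argument does not even use that $X$ is a dendrite — it is a general fact about $\omega$ and $\Omega$ on compact metric spaces — so the reference to \cite{Sun} is presumably for a packaged statement; I would simply give the three-line direct proof above. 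The one place to be careful in writing it up is to keep the two convergent sequences $(y_i)$ and $(f^{m_j}(x))$ logically separate and to state clearly that passing to subsequences is harmless.
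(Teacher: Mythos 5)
Your proof is correct, but it takes a different route from the paper: the paper gives no argument at all for Lemma \ref{minimal}, simply invoking it as a consequence of Lemma 2.6 of \cite{Sun}, whereas you prove it directly. The substance of your argument is the third paragraph: the inclusion $\omega(y,f)\subseteq\omega(x,f)$ follows from closedness and invariance of $\omega(x,f)$, and the key observation is the general fact that $y\in\omega(x,f)$ implies $\omega(x,f)\subseteq\Omega(y,f)$, after which the hypothesis $\Omega(y,f)=\omega(y,f)$ at the single point $y$ finishes the proof. As you note, this uses nothing about dendrites and is valid for any continuous self-map of a metric space, so your version is both more elementary and more general than relying on the packaged statement from \cite{Sun}; that is a genuine gain in self-containedness. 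Two small points for the write-up: your first two paragraphs (restricting to the invariant set $\ideal$ and the vague ``backward structure'' discussion) do nothing for the final argument and should be cut; and since the paper's definition of $\Omega(y,f)$ demands an \emph{increasing} sequence of exponents, you should make the bookkeeping explicit by a recursive choice — for each $j$ first pick $n_{i_j}$ with $d(f^{n_{i_j}}(x),y)<1/j$, then pick $m_{r_j}$ with $d(f^{m_{r_j}}(x),z)<1/j$ and $m_{r_j}>n_{i_j}+k_{j-1}$, and set $k_j=m_{r_j}-n_{i_j}$ — rather than the ``$m_j-n_{i(j)}\geq 1$'' phrasing, which by itself only gives positivity, not monotonicity, of the exponents.
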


\begin{theorem}\label{Teo908i71}
Let $X$ be a dendrite and let $f\colon X\to X$ be an onto map. If $\Omega(x,f)=\omega(x,f)$ for each $x\in X$, then $\overline{\mathrm{Per}(f)}=X$.
\end{theorem}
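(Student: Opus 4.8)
The plan is to argue by contradiction. Suppose $\overline{\mathrm{Per}(f)}\neq X$, and pick a point $z\in X\setminus\overline{\mathrm{Per}(f)}$. The strategy is to exhibit a point whose $\Omega$-set is strictly larger than its $\omega$-set, contradicting the hypothesis; the natural candidate is a fixed point near the ``boundary'' of the closure of the periodic set. First I would recall that $\mathrm{Fix}(f)\neq\emptyset$ by Theorem \ref{fixpoint}, and that by Lemma \ref{Prop2} the set $\mathrm{Per}(f)$ — and each $\mathrm{Fix}(f^m)$ — is connected, hence $\overline{\mathrm{Per}(f)}$ is a subcontinuum of $X$, so by Theorem \ref{prop00} it is itself a dendrite. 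Since $f$ is onto, Lemma \ref{claim00} gives $f(\bigcap_n f^n(X))=\bigcap_n f^n(X)=X$; more to the point, $\Omega(x,f)\subseteq X$ trivially and we have the inclusion $\mathrm{Per}(f)\subseteq\bigcap_n f^n(X)=X$, so the obstacle is purely that $\mathrm{Per}(f)$ need not be dense.

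The key geometric step: let $D=\overline{\mathrm{Per}(f)}$, a proper subcontinuum. Consider the arc from $z$ to $D$; it meets $D$ in a single point $p$ (the first-point map onto a subdendrite), and $p\in D$. I want to show $p$ can be taken to be a fixed point, or at least a periodic point, and then derive that the arc sticking out of $D$ toward $z$ forces extra points into $\Omega(p,f)$. The idea mirrors the proof of Lemma \ref{LemEq}: if $x_k\to p$ along the arc $(p,z)$ with $f$ eventually moving these points back toward $D$, then using that $[f^{n_k}(x_k),f^{n_k}(p)]\subseteq f^{n_k}([x_k,p])$ one builds preimages $z_k\in[x_k,p]\to p$ realizing an entire arc inside $\Omega(p,f)$. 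Since that arc would have to lie in $\omega(p,f)=\Omega(p,f)$, and by Lemma \ref{minimal} $\omega(y,f)=\omega(p,f)$ for every $y$ in it, every such $y$ would be recurrent. The tension I aim to exploit: points on that arc are outside $\overline{\mathrm{Per}(f)}$, yet an $f$-invariant arc of recurrent points in a dendrite tends to force periodic behavior — in the homeomorphic case Theorem \ref{acosta} says $\omega$-sets are periodic orbits or Cantor sets, never arcs.

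The cleanest route is probably to first reduce to the case where $f$ restricted to $D$ is onto $D$ (which it is, since $f(D)=\overline{f(\mathrm{Per}(f))}=\overline{\mathrm{Per}(f)}=D$ using surjectivity-free invariance of $\mathrm{Per}(f)$), observe $\overline{\mathrm{Per}(f|_D)}=D$ and $\mathrm{Per}(f|_D)$ connected, so by Lemma \ref{Lemeq0} $f|_D$ is a homeomorphism of $D$. Then the retraction $r\colon X\to D$ commutes suitably with $f$ on a neighborhood, and any point $x\notin D$ has $r(f^n(x))$ governed by the homeomorphism $f|_D$; a careful analysis of where the ``first entry point'' $r(f^n(x))$ lands, combined with $\omega(x,f)=\Omega(x,f)$, should show $x$ itself must be attracted into $D$ in a way incompatible with $x\notin\overline{\mathrm{Per}(f)}$ unless $X=D$. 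The main obstacle I anticipate is controlling the interaction between $f$ off $D$ and the homeomorphism $f|_D$ — specifically, showing that the ``excess'' arc from $z$ to $D$ genuinely produces a non-degenerate subarc of $\Omega(p,f)\setminus\omega(p,f)$, rather than collapsing; this is where the connectedness of $\mathrm{Fix}(f^m)$ and the dendrite structure (unique arcs, the fact that a point is a cut point or an end point, Theorem \ref{theo00}) must be pushed hardest.
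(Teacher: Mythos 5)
Your text is a plan rather than a proof, and the two steps on which everything hinges are precisely the ones left unproved. First, to ``mirror Lemma \ref{LemEq}'' at the point $p$ where the arc from $z$ first meets $D=\overline{\mathrm{Per}(f)}$, you need sequences $x_k\to p$ and $n_k\to\infty$ with $f^{n_k}(x_k)$ and $f^{n_k}(p)$ converging to \emph{distinct} points; that is a failure of equicontinuity at $p$, which is not among your hypotheses, so the construction of the arc inside $\Omega(p,f)$ has no starting data. Nothing forces the forward orbits of points on $(p,z)$ to return near $D$ at all, and your alternative route (``$x$ must be attracted into $D$'') is actually the opposite of what is true under the hypothesis: the paper proves (Claim \ref{Claim5tg5}) that for $x\notin\overline{\mathrm{Per}(f)}$ one has $\omega(x,f)\cap\overline{\mathrm{Per}(f)}=\emptyset$, and this is where surjectivity does real work, via a backward orbit $f(x_{i+1})=x_i$, a limit point $z_0$ of it, the identity $\Omega(z_0,f)=\omega(z_0,f)$, and Lemma \ref{minimal}. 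Your sketch never uses ontoness in any comparably essential way, which is a warning sign, since the conclusion can fail without it.

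Second, even granting the (correct) observations that $D$ is a subdendrite and that $f|_D$ is a homeomorphism by Lemma \ref{Lemeq0}, the remaining assertions do not hold up: the first-point retraction $r\colon X\to D$ does not commute with $f$ in any useful sense (there is no such semiconjugacy in general), and Theorem \ref{acosta} cannot be applied to an arc lying outside $D$, where $f$ is not known to be injective; also ``$p$ can be taken to be periodic'' is asserted, not proved. The paper's actual mechanism is different and is missing from your proposal: after Claim \ref{Claim5tg5} one finds a component $W$ of $X\setminus\overline{\mathrm{Per}(f)}$ with $f^m(W)\subseteq W$, shows $\overline{W}$ meets $\overline{\mathrm{Per}(f)}$ in a single end point $p_0$, builds an $f^m$-invariant subdendrite $R\subseteq W$ (closure of the increasing iterates of the smallest continuum containing $\omega(x,f^m)$, with $p_0\notin R$ via Lemma \ref{lemaf45f} and the hypothesis $\Omega=\omega$), and then contradicts the fixed-point property of dendrites (Theorem \ref{fixpoint}), since $\mathrm{Fix}(f^m|_R)\subseteq\mathrm{Per}(f)$ would have to meet $R$. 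Without an argument supplying either your missing ``expanding arc at $p$'' step or some substitute for this invariant-subdendrite/fixed-point contradiction, the proposal does not prove Theorem \ref{Teo908i71}.
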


\begin{proof}
Suppose that $X\setminus \overline{\mathrm{Per}(f)}\neq\emptyset$. Since $f(\mathrm{Per}(f))=\mathrm{Per}(f)$, it is easy to see that \begin{equation}\label{eq000} f(\overline{\mathrm{Per}(f)})=\overline{\mathrm{Per}(f)}. \end{equation}

\begin{claim}\label{Claim5tg5}
$\omega(x,f)\cap \overline{\mathrm{Per}(f)}=\emptyset$, for each $x\in X\setminus \overline{\mathrm{Per}(f)}$.
\end{claim}

Suppose the contrary that there exists $x\in X\setminus \overline{\mathrm{Per}(f)}$ such that $\omega(x,f)\cap \overline{\mathrm{Per}(f)}\neq\emptyset$, i.e., there is $y_0\in\overline{\mathrm{Per}(f)}$ and an increasing sequence $(n_k)_{k\in\N}$ in $\N$ such that $\lim_{k\to\infty}f^{n_k}(x)=y_0$. Since $f$ is onto, it is not difficult to construct a sequence $(x_n)_{n\in\N}$ in $X$ such that  $f(x_1)=x$ and $f(x_{i+1})=x_i$, for each $i\in\N$. By compactness of $X$, there exists a subsequence $(x_{n_i})_{i\in\N}$ of $ (x_n)_{n\in\N}$ such that $\lim_{i\to\infty}x_{n_i}=z_0$, for some $z_0\in X$. Observe that $x\in \Omega(z_0,f)$ in view of $f^{n_i}(x_{n_i})=x$ for each $i\in\N$, and  $\lim_{i\to\infty}x_{n_i}=z_0$. So, we conclude that $x\in\omega(z_0,f)=\Omega(z_0,f)$. By applying Lemma \ref{minimal} twice, it follows that
\begin{gather*}
    \omega(x,f)=\omega(z_0,f),\quad\text{and}\quad\omega(y_0,f)=\omega(z_0,f).
\end{gather*}
Thus, $x\in \omega(y_0,f)$. But $\omega(y_0,f)\subseteq \overline{\mathrm{Per}(f)}$, by equation \eqref{eq000}. This contradicts the fact that $x\notin \overline{\mathrm{Per}(f)}$. Therefore, $\omega(x,f)\cap \overline{\mathrm{Per}(f)}=\emptyset$ for each $x\in X\setminus \overline{\mathrm{Per}(f)}$. We have completed the proof of Claim \ref{Claim5tg5}.

\begin{claim}\label{Claim6rh6}
There exists a component $W$ of $X\setminus \overline{\mathrm{Per}(f)}$ such that $f^m(W)\subseteq W$ for some $m\in\N$.
\end{claim}

Let $x\in X\setminus \overline{\mathrm{Per}(f)}$ and $z\in \omega(x,f)$ be fixed. Then there is a sequence $(n_k)_{k\in\N}$ in $\N$ such that $\lim_{k\to\infty}f^{n_k}(x)=z$. By Claim \ref{Claim5tg5}, $z\in X\setminus \overline{\mathrm{Per}(f)}$. Let $W$ be the component of $X\setminus \overline{\mathrm{Per}(f)}$ such that $z\in W$. Locally connectedness of $X$ implies that $W$ is open. Hence, there exists $k_0\in\N$ such that $f^{n_k}(x)\in W$ for each $k\geq k_0$. Let $s,t\in\mathbb N$ be such that $s,t\geq k_0$ and $n_s<n_t$. Notice that $f^{n_s}(x)\in W$ and $f^{n_t-n_s}(f^{n_s}(x))=f^{n_t}(x)\in W$. So, if $m=n_t-n_s$,  then $f^m(W)\cap W\neq\emptyset$. By Claim \ref{Claim5tg5} and equation \eqref{eq000} we have $f^m(W)\cap\overline{\mathrm{Per}(f)}=\emptyset$.  Thus, $f^m(W)\subseteq W$ as $f^m(W)$ is connected. We have completed the proof of  Claim \ref{Claim6rh6}.

\bigskip

         Since $W$ is a component of $X\setminus \overline{\mathrm{Per}(f)}$, $\overline{W}\cap\overline{\mathrm{Per}(f)}\neq\emptyset$, by \cite[Theorem 5.4]{Nadler}. Note that $\overline{W}$ is a dendrite (by Theorem \ref{prop00}).

 \begin{claim}\label{newclaim}
         There exists $p_0\in X$ such that $\overline{W}\cap\overline{\mathrm{Per}(f)}=\{p_0\}$, i.e., $\overline{W}=W\cup\{p_0\}$, where $p_0\in \mathrm{End}(\overline{W})$.
         \end{claim}
         Suppose that $\{x,y\}\subseteq \overline{W}\cap\overline{\mathrm{Per}(f)}$, where $x\neq y$. Since $W$ is connected, $W\cup \{x,y\}$ is connected. Hence, there exists an arc $[x,y]\subseteq W\cup\{x,y\}$, because every connected subset of a dendrite is arcwise connected (see \cite[Proposition 10.9]{Nadler}). Now, observe that $\overline{\mathrm{Per}(f)}$ is a dendrite by Lemma \ref{Prop2} and Theorem \ref{prop00}. So, $[x,y]\subseteq \overline{\mathrm{Per}(f)}$. This contradicts the fact that $W\cap \overline{\mathrm{Per}(f)}=\emptyset$. Then we have completed the proof of Claim \ref{newclaim}.

         % Furthermore, it is not difficult to see that there exists $p_0\in X$ such that $\overline{W}\cap\overline{\mathrm{Per}(f)}=\{p_0\}$; i.e., $\overline{W}=W\cup\{p_0\}$, where $p_0\in \mathrm{End}(\overline{W})$. {\red  hace falta usar  Lemma \ref{Prop2}?}

         Let $x\in W$ be given. By Claim \ref{Claim6rh6}, $f^{km}(x)\in W$ for each $k\in\N$. By Claim \ref{Claim5tg5}, $\omega(x,f^m)\subseteq W$. We set
         \begin{equation}
         H=\bigcap\{L\in\mathcal{C}(X) : \omega(x,f^m)\subseteq L\}.
         \end{equation}
         Since $X$ is dendrite, $H$ is a continuum (see \cite[Theorem 10.10]{Nadler}). Furthermore, $p_0\notin H$, by Lemma \ref{lemaf45f}. Thus, $H\subseteq W$. By Proposition \ref{prop01}, $f^{m}(\omega(x,f^m))=\omega(x,f^m)$ and so $H\subseteq f^m(H)$. Then,  $\{f^{km}(H)\,:\, k\in\N\}$ is an increasing sequence of continua.  By Claim \ref{Claim5tg5} we have $p_0\notin f^{km}(H)$ for each $k\in\N$. Henceforward
         by Claim \ref{Claim6rh6}, $f^{km}(H)\subseteq W$ for each $k\in\N$.

         Let $R=\overline{\bigcup_{k\in\N}f^{km}(H)}$. Observe that $R\subseteq W\cup\{p_0\}$ and $f^m(R)=R$. We shall show that $p_0\notin R$. If not, there exists a sequence $(y_n)_{n\in\N}$ in $ \bigcup_{k\in\N}f^{km}(H)$ such that $\lim_{n\to\infty}y_n=p_0$. For each $n\in\N$, let $x_n\in H$ be such that $f^{l_nm}(x_n)=y_n$ for some $l_n\in\N$. Since $H$ is compact, there exists a subsequence $(x_{n_k})_{k\in\N}$ of $ (x_n)_{n\in\N}$ such that $\lim_{k\to\infty} x_{n_k}=x_0$, for some $x_0\in H$. Thus, $p_0\in\Omega(x_0,f^m)\subseteq \Omega(x_0,f)=\omega(x_0,f)$, contradicting Claim \ref{Claim5tg5}. Therefore, $p_0\notin R$.

         Finally, observe that $R$ is a dendrite, by Theorem \ref{prop00}. Thus, $f^m|_R\colon R\to R$ is a map such that $\mathrm{Fix}(f^m|_R)=\emptyset$, since $\mathrm{Fix}(f^m|_R)\subseteq \mathrm{Per}(f)\subseteq X\setminus R$. This contradicts Theorem \ref{fixpoint}. Therefore, $\overline{\mathrm{Per}(f)}=X$.
\end{proof}
%\item $f^k(R)=R$. Since $f^k$ is a closed map and $f^k(\bigcup_{i\in\N}f^{ki}(H))=\bigcup_{i\in\N}f^{ki}(H)$, we have that $R\subseteq f^k(R)$. We show that $f^k(R)\subseteq R$. Let $y\in f^k(R)$. Let $x\in R$ such that $f^k(x)=y$. Hence, there exists a sequence $(x_i)_{i\in\N}\subseteq \bigcup_{i\in\N}f^{ki}(H)$ such that $\lim_{i\to\infty}x_i=x$. It is clear that $f^k(x_i)\in \bigcup_{i\in\N}f^{ki}(H)$, for each $i\in\N$. Thus, $\lim_{i\to\infty}f^k(x_i)=f^k(x)$ and $y\in R$.

Next result shows that in the previous theorem the hypothesis that $f$ is onto is not necessary.

\begin{corollary}\label{Croequig}
Let $X$ be a dendrite and $f\colon X\to X$ be a map. If $\Omega(x,f)=\omega(x,f)$ for each $x\in X$, then $\overline{\mathrm{Per}(f)}=\bigcap_{n\in\N}f^n(X)$.
\end{corollary}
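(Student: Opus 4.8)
The plan is to reduce the statement to Theorem \ref{Teo908i71} by passing to the restriction of $f$ to $J:=\bigcap_{n\in\N}f^n(X)$. First I would record the structure of $J$: each $f^n(X)$ is a continuum, being a continuous image of the continuum $X$, so $J$ is a nested intersection of continua and hence itself a (nonempty) continuum; by Theorem \ref{prop00} it is a dendrite. By Lemma \ref{claim00} we have $f(J)=J$, so $g:=f|_J\colon J\to J$ is a well-defined onto map between dendrites.

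Next I would check that $g$ inherits the hypothesis of Theorem \ref{Teo908i71}. For $x\in J$ the whole orbit $\mathcal O(x,f)$ lies in $J$, so $\omega(x,g)=\omega(x,f)$; and any sequence in $J$ converging to $x$ is in particular a sequence in $X$, so $\Omega(x,g)\subseteq\Omega(x,f)$. Using the hypothesis $\Omega(x,f)=\omega(x,f)$ together with the trivial inclusion $\omega(x,g)\subseteq\Omega(x,g)$, we get
\[
\Omega(x,g)\subseteq\Omega(x,f)=\omega(x,f)=\omega(x,g)\subseteq\Omega(x,g),
\]
hence $\Omega(x,g)=\omega(x,g)$ for every $x\in J$. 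Theorem \ref{Teo908i71} applied to $g$ then gives $\overline{\mathrm{Per}(g)}=J$. Since every periodic point of $g$ is a periodic point of $f$, we have $\mathrm{Per}(g)\subseteq\mathrm{Per}(f)$, and therefore $J=\overline{\mathrm{Per}(g)}\subseteq\overline{\mathrm{Per}(f)}$.

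For the reverse inclusion I would argue directly: if $f^k(x)=x$ then $x\in f^{mk}(X)$ for every $m\in\N$, and given an arbitrary $n\in\N$ we may choose $m$ with $mk\ge n$, so that $x\in f^{mk}(X)\subseteq f^n(X)$; thus $\mathrm{Per}(f)\subseteq J$, and since $J$ is closed, $\overline{\mathrm{Per}(f)}\subseteq J$. Combining the two inclusions yields $\overline{\mathrm{Per}(f)}=J=\bigcap_{n\in\N}f^n(X)$. I do not anticipate a genuine obstacle here; the only points needing a line of care are the connectedness of $J$ (so that Theorem \ref{prop00} and Theorem \ref{Teo908i71}, stated for dendrites, apply) and the routine transfer of the equality $\Omega=\omega$ from $f$ to $g=f|_J$.
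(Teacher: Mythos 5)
Your proposal is correct and follows essentially the same route as the paper's own proof: restrict $f$ to $J=\bigcap_{n\in\N}f^n(X)$, use Lemma \ref{claim00} to get surjectivity, transfer the equality $\Omega=\omega$ to the restriction, and apply Theorem \ref{Teo908i71}. The only difference is that you spell out two details the paper leaves implicit (that $J$ is a dendrite, and that $\mathrm{Per}(f)\subseteq J$), which is a welcome but minor addition.
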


\begin{proof}
Let $J=\bigcap_{n\in\N}f^n(X)$ and $g=f|_J\colon J\to J$. By Lemma \ref{claim00} $g$ is an onto map. If $z\in J$, it is clear that:
\begin{itemize}
\item $\omega(z,f)=\omega(z,g)$;
\item $\omega(z,g)\subseteq \Omega(z,g)$; and
\item $\Omega(z,g)\subseteq \Omega(z,f)$.
\end{itemize}
Thus, $\omega(x,g)=\Omega(x,g)$, for each $x\in J$. Since $\mathrm{Per}(f)\subseteq J$, we have  $\overline{\mathrm{Per}(g)}=\overline{\mathrm{Per}(f)}$. Therefore, $\overline{\mathrm{Per}(f)}=\bigcap_{n\in\N}f^n(X)$, by Theorem \ref{Teo908i71}.
\end{proof}

%The next result follows from Corollary \ref{Croequig} and Theorem \ref{TeoEqDen}.

%\begin{corollary}\label{Coro64j7u}
%Let $X$ be a dendrite and let $f\colon X\to X$ be a map. If $\Omega(x,f)=\omega(x,f)$ for each $x\in X$, then $f$ is equicontinuous.
%t\end{corollary}

The following result generalizes Lemma 2.4 of \cite{Sun}.

%\begin{lemma}\label{TeoEqDen}
%Let $X$ be a dendrite and let $f\colon X\to X$ be a map. If $\Omega(x,f)=\omega(x,f)$ for each $x\in X$, and $\overline{\mathrm{Per}(f)}=\bigcap_{n\in\N}f^n(X)$, then $f$ is equicontinuous.
%\end{lemma}

\begin{lemma}\label{TeoEqDen}
Let $X$ be a dendrite and $f\colon X\to X$ be a map. If $\Omega(x,f)=\omega(x,f)$ for each $x\in X$, then $f$ is equicontinuous.
\end{lemma}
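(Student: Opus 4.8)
The plan is to reduce the general case to the situation where $f$ is a homeomorphism, for which the structure of $\omega$-limit sets is controlled by Theorem \ref{acosta}. First I would pass to the invariant subdendrite-like set $J=\bigcap_{n\in\N}f^n(X)$ and put $g=f|_J\colon J\to J$. By Lemma \ref{claim00}, $g$ is onto, and as in the proof of Corollary \ref{Croequig} one checks that $\omega(x,g)=\Omega(x,g)$ for every $x\in J$; moreover by Corollary \ref{Croequig} we have $\overline{\mathrm{Per}(g)}=\overline{\mathrm{Per}(f)}=J$, so $\mathrm{Per}(g)$ is dense in $J$. By Lemma \ref{Prop2}, $\mathrm{Per}(f)$ is connected, hence so is $\mathrm{Per}(g)$. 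Note $J$ need not be a dendrite a priori, but each of its components is a subdendrite of $X$; the density and connectedness of $\mathrm{Per}(g)$ should force $J$ itself to be connected, hence a dendrite by Theorem \ref{prop00}, so that Lemma \ref{Lemeq0} applies and $g$ is a homeomorphism of the dendrite $J$.

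Next I would exploit that $g$ is a homeomorphism to understand $\Omega(x,f)$ for arbitrary $x\in X$. For $x\in J$, Theorem \ref{acosta} says $\omega(x,g)$ is either a periodic orbit or a Cantor set; in particular it is totally disconnected. Since $\omega(x,f)=\omega(x,g)$ for $x\in J$, and since $\Omega(x,f)=\omega(x,f)$ by hypothesis, $\Omega(x,f)$ is totally disconnected for every $x\in J$. For a general $x\in X$ one still has $\Omega(x,f)=\omega(x,f)\subseteq J$ (the latter because $f^{n_i}(x_i)$ eventually lands in every $f^m(X)$ — more precisely, $\omega(x,f)\subseteq\bigcap_n f^n(X)=J$ and $f(J)=J$ gives $\omega(x,f)$ invariant inside $J$), and by Lemma \ref{minimal} applied in $X$, $\omega(z,f)=\omega(x,f)$ for each $z\in\omega(x,f)$; since any such $z$ lies in $J$, $\omega(x,f)=\omega(z,f)=\omega(z,g)$ is totally disconnected. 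Thus $\Omega(x,f)$ is totally disconnected for \emph{every} $x\in X$.

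Finally, with $\Omega(x,f)$ totally disconnected for all $x\in X$, Lemma \ref{LemEq} immediately yields that $f$ is equicontinuous, completing the proof.

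\textbf{Main obstacle.} The delicate point I expect is the step showing $J=\bigcap_n f^n(X)$ is connected (so that Lemma \ref{Lemeq0} is applicable and $g$ is a homeomorphism). One knows $\mathrm{Per}(f)$ is connected and dense in $J$, but $J$ a priori is only a compact invariant set. The argument should run: if $J$ were disconnected, write $J=A\sqcup B$ with $A,B$ nonempty relatively clopen; $\overline{\mathrm{Per}(f)}=J$ forces $\mathrm{Per}(f)$ to meet both $A$ and $B$, contradicting connectedness of $\mathrm{Per}(f)$ (a connected set cannot meet two disjoint open sets covering its closure unless it misses one). So $J$ is connected, hence a subcontinuum of the dendrite $X$, hence a dendrite by Theorem \ref{prop00}. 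An alternative, perhaps cleaner, route avoids Theorem \ref{acosta} entirely: once $g$ is a homeomorphism with dense periodic points on the dendrite $J$, show directly that $\Omega(x,g)$ is totally disconnected — but invoking Theorem \ref{acosta} is the shortest path, and the reference to Theorem \ref{Teo908i71}/Corollary \ref{Croequig} already supplies everything needed to set it up.
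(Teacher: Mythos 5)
Your argument is correct and follows essentially the same route as the paper's proof: pass to $J=\bigcap_{n\in\N}f^n(X)$, use Corollary \ref{Croequig} and Lemma \ref{Prop2} to see that $J=\overline{\mathrm{Per}(f)}$ is a dendrite, apply Lemma \ref{Lemeq0} and Theorem \ref{acosta} to get totally disconnected $\omega$-limit sets on $J$, transfer this to all $x\in X$ via Lemma \ref{minimal}, and conclude with Lemma \ref{LemEq}. The connectedness of $J$ that you flag as the delicate point is immediate, since $J=\overline{\mathrm{Per}(f)}$ is the closure of the connected set $\mathrm{Per}(f)$, which is exactly how the paper handles it before invoking Proposition \ref{prop00}.
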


\begin{proof}
Let $J=\bigcap_{n\in\N}f^n(X)$. By Corollary \ref{Croequig} and Lemma \ref{Prop2}, $\overline{\mathrm{Per}(f)}=J$ and $\mathrm{Per}(f)$ is connected, thus $J$ is a dendrite by Proposition \ref{prop00}. So, $f|_J\colon J\to J$ is a homeomorphism by Lemma \ref{Lemeq0}. By Theorem \ref{acosta},  $\omega(y,f)$ is totally disconnected for each $y\in J$. Let $x\in X$ and $y\in \omega (x,f)$. Notice that $y\in J$ as $\omega(x,f)\subseteq J$. By Lemma \ref{minimal} we have  $\omega (x,f)=\omega (y,f)$ and then $\omega (x,f)$ is totally disconnected. As $\omega(x,f)=\Omega(x,f)$ for each $x\in X$, then $f$ is equicontinuous, by Lemma \ref{LemEq}.
\end{proof}

The following theorem generalizes Theorem 2.8 of \cite{Sun} and Theorem 2.8 of \cite{Sun2}, and it comprises all results on this section.

\begin{theorem}\label{MainTheorem}
Let $X$ be a dendrite and let $f\colon X\to X$ be a map. The following are equivalent:
	\begin{enumerate}
		\item $\omega_f$ is continuous and $\overline{\mathrm{Per}(f)}=\bigcap_{n\in\N}f^n(X)$;
		\item $\omega(x,f)=\Omega(x,f)$ for every  $x\in X$;
		\item $f$ is equicontinuous.
	\end{enumerate}
\end{theorem}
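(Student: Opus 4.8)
The plan is to prove the cycle of implications $(3)\Rightarrow(1)\Rightarrow(2)\Rightarrow(3)$, collecting the already-established auxiliary results. Each arrow is short because the heavy lifting has been done in the preceding lemmas, so the proof of Theorem~\ref{MainTheorem} is mainly an orchestration.

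\textbf{Step 1: $(3)\Rightarrow(1)$.} Assume $f$ is equicontinuous. By Theorem~\ref{theo946t}, $\omega_f$ is continuous, which is the first half of~(1). For the second half I need $\overline{\mathrm{Per}(f)}=\bigcap_{n\in\N}f^n(X)$. The inclusion $\overline{\mathrm{Per}(f)}\subseteq\bigcap_{n\in\N}f^n(X)$ is automatic: each periodic point lies in $f^n(X)$ for all $n$ (if $f^k(x)=x$ then $x=f^{nk}(x)\in f^{nk}(X)\subseteq f^n(X)$ for every $n$), and $\bigcap_n f^n(X)$ is closed. For the reverse inclusion, note that by Lemma~\ref{equi-omega-iguales} we have $\Omega(x,f)=\omega(x,f)$ for every $x$, hence by Corollary~\ref{Croequig} (which only uses this hypothesis) we get $\overline{\mathrm{Per}(f)}=\bigcap_{n\in\N}f^n(X)$ directly. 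So in fact $(3)$ gives both conjuncts of~(1) via Theorem~\ref{theo946t}, Lemma~\ref{equi-omega-iguales}, and Corollary~\ref{Croequig}.

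\textbf{Step 2: $(1)\Rightarrow(2)$.} This is precisely Lemma~\ref{Prop3d3}, which is stated for arbitrary compact metric spaces and therefore applies to dendrites without change.

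\textbf{Step 3: $(2)\Rightarrow(3)$.} This is exactly Lemma~\ref{TeoEqDen}: from $\Omega(x,f)=\omega(x,f)$ for all $x$ one concludes $f$ is equicontinuous, using Corollary~\ref{Croequig}, Lemma~\ref{Prop2}, Lemma~\ref{Lemeq0}, Theorem~\ref{acosta}, Lemma~\ref{minimal}, and Lemma~\ref{LemEq}.

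Since all three implications reduce to results proved earlier in the paper, I do not anticipate a genuine obstacle in assembling Theorem~\ref{MainTheorem} itself; the conceptual difficulty has already been absorbed into Theorem~\ref{Teo908i71} (the density of periodic points) and Lemma~\ref{TeoEqDen}. The only point requiring a line of care is the trivial inclusion $\overline{\mathrm{Per}(f)}\subseteq\bigcap_n f^n(X)$ in Step~1, but as noted it follows immediately from $f^{k}(x)=x\Rightarrow x\in f^{n}(X)$ for all $n$. Thus the write-up is essentially: ``$(3)\Rightarrow(1)$ by Theorem~\ref{theo946t}, Lemma~\ref{equi-omega-iguales} and Corollary~\ref{Croequig}; $(1)\Rightarrow(2)$ by Lemma~\ref{Prop3d3}; $(2)\Rightarrow(3)$ by Lemma~\ref{TeoEqDen}.''
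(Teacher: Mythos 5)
Your proposal is correct and uses exactly the same ingredients as the paper's proof (Theorem \ref{theo946t}, Lemma \ref{equi-omega-iguales}, Corollary \ref{Croequig}, Lemma \ref{Prop3d3}, Lemma \ref{TeoEqDen}); the only difference is cosmetic, since you arrange them as the cycle $(3)\Rightarrow(1)\Rightarrow(2)\Rightarrow(3)$ while the paper first notes $(2)\Leftrightarrow(3)$ and then deduces $(2)\Rightarrow(1)$ and $(1)\Rightarrow(2)$. Your aside on the inclusion $\overline{\mathrm{Per}(f)}\subseteq\bigcap_{n}f^n(X)$ is harmless but redundant, as Corollary \ref{Croequig} already yields the full equality.
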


\begin{proof}
By Lemmas \ref{equi-omega-iguales} and  \ref{TeoEqDen} we have the equivalence between (2) and (3).
 Since (2) and (3) are equivalent, by  Theorem \ref{theo946t} and Corollary \ref{Croequig} we have that (2) implies (1). Finally, Lemma \ref{Prop3d3} shows that (1) implies (2).
\end{proof}

From Lemma \ref{equi-omega-iguales} and Theorem \ref{MainTheorem} we immediately get the following.

\begin{corollary}
Let $X$ be a dendrite and let $f\colon X\to X$ be a map. If   $\omega(x,f)=\Omega(x,f)$ for all $x\in X$, then
$\omega(x,f^n)=\Omega(x,f^n)$ for all $x\in X$ and $n\in\N$.
\end{corollary}
We finish this section showing a necessary condition to have equicontinuity in dendrites.

\begin{theorem}\label{todos-peri}
Let $X$ be a dendrite and $f\colon X\to X$ be a map. If $\mathrm{Per}(f)=X$, then $f$ is equicontinuous.
\end{theorem}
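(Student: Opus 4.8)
The plan is to prove that $\omega(x,f)=\Omega(x,f)$ for every $x\in X$ and then quote Theorem \ref{MainTheorem}. Since $\mathrm{Per}(f)=X$, the map $f$ is onto and $\mathrm{Per}(f)$ is connected and dense, so $f$ is a homeomorphism by Lemma \ref{Lemeq0}. I will use two observations, each valid for an arbitrary homeomorphism $h$ of a dendrite all of whose points are periodic. (a) $\mathrm{Fix}(h^m)$ is connected, hence a subdendrite, for every $m$: if $a,b\in\mathrm{Fix}(h^m)$, then $h^m$ carries the arc $[a,b]$ onto itself fixing its endpoints, so $h^m|_{[a,b]}$ is an orientation-preserving self-homeomorphism of an arc all of whose points are periodic, and such a map is the identity; thus $[a,b]\subseteq\mathrm{Fix}(h^m)$. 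In particular, if $p$ is the period of a point $x$, then the span $T_x$ of the orbit $\mathcal O(x,f)$ (a finite tree, by \cite[Theorem 10.10]{Nadler}) is pointwise fixed by $f^{p}$, so $f|_{T_x}$ has finite order and is therefore equicontinuous. (b) If $Y$ is an $h$-invariant subcontinuum, the first-point retraction $r_Y\colon X\to Y$ is $h$-equivariant, $r_Y\circ h=h\circ r_Y$; and if $h$ fixes $Y$ pointwise then $r_Y\circ h=r_Y$, so $h$ permutes the fibres of $r_Y$.

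Fix $x$ and suppose, for contradiction, that there is $y\in\Omega(x,f)\setminus\mathcal O(x,f)$; choose $z_n\to x$ and $k_n\to\infty$ with $f^{k_n}(z_n)\to y$, and set $r=r_{T_x}$ and $p=$ the period of $x$. By (b) one has $r(f^{k_n}(z_n))=f^{k_n}(r(z_n))$; since $r(z_n)\to x$ and $f|_{T_x}$ is equicontinuous, the points $r(f^{k_n}(z_n))$ stay arbitrarily close to the finite set $\mathcal O(x,f)$, so their limit $r(y)$ lies in $\mathcal O(x,f)$, say $r(y)=f^{j}(x)$. As $f^{p}|_{T_x}=\mathrm{id}$, some subsequence of $(k_n)$ is constant modulo $p$, and comparing limits forces that residue to be $j$; hence $k_n=j+p\,m_n$ with $m_n\to\infty$, so $g^{m_n}(z_n)\to f^{-j}(y)$ where $g=f^{p}$. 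Replacing $y$ by $f^{-j}(y)$ we may assume $r(y)=x$, $y\in\Omega(x,g)$, and $y\ne x$. Since $g$ fixes $T_x$ pointwise, $g$ preserves the fibre $Z:=r^{-1}(x)$, a subdendrite containing $x$ with $g(x)=x$ and every point periodic; projecting the witnessing sequence by $r_Z$ gives $y\in\Omega(x,g|_Z)$. Now let $F=\mathrm{Fix}(g|_Z)$, a subdendrite through $x$; the $g$-equivariant retraction $r_F$ satisfies $r_F\circ g=r_F$, so $\Omega(x,g|_Z)\subseteq r_F^{-1}(x)$, and restricting $g$ to $Z':=r_F^{-1}(x)$ yields a homeomorphism $\varphi$ of a nondegenerate subdendrite with every point periodic, $\mathrm{Fix}(\varphi)=\{x\}$, and $y\in\Omega(x,\varphi)$, $y\ne x$.

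It therefore suffices to prove: \emph{if $\varphi$ is a homeomorphism of a nondegenerate dendrite $D$ all of whose points are periodic and $\mathrm{Fix}(\varphi)=\{v\}$, then $\Omega(v,\varphi)=\{v\}$.} First, $v$ is a cut point of $D$: otherwise $v$ is an endpoint, and for any $w\ne v$ the finite tree $T_w=\mathrm{span}(\mathcal O(w,\varphi))$ is invariant under the finite-order map $\varphi|_{T_w}$, hence contains a fixed point of $\varphi$ (Theorem \ref{fixpoint}), i.e.\ contains $v$ — contradicting Lemma \ref{lemaf45f}, which provides a subcontinuum containing $\mathcal O(w,\varphi)$ and avoiding the endpoint $v$. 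The same argument, applied to the closures of the components of $D\setminus\{v\}$, shows that $\varphi$ permutes those components with none invariant. Pick one periodic family $C,\varphi(C),\dots,\varphi^{q-1}(C)$ (so $q\ge 2$), let $S$ be the union of their closures — a subdendrite meeting $\mathrm{Fix}(\varphi)$ only in $v$ — and define $\pi\colon S\to\overline C$ by $\pi|_{\overline C}=\mathrm{id}$ and $\pi=\varphi^{-i}$ on $\varphi^{i}(C)$. One checks that $\pi$ is continuous, $\pi\circ\varphi=\pi$, and $\pi^{-1}(v)=\{v\}$. If $y\in\Omega(v,\varphi)$ with $y\ne v$, then $y$ lies in one such $S$; projecting the witnessing sequence into $S$ and applying the continuous $\varphi$-invariant map $\pi$ yields $\pi(y)=v$, hence $y=v$, a contradiction. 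This establishes $\Omega(x,f)=\omega(x,f)$ for every $x$, so $f$ is equicontinuous by Theorem \ref{MainTheorem}.

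The hard point — and the reason the bounded-period argument available on the interval does not transfer directly — is that periodic points accumulating on $x$ may have unbounded periods, so no fixed number of iterates of $f$ controls $\Omega(x,f)$. This is overcome by peeling $X$ down through a chain of $f$-equivariant retractions (onto $T_x$, then onto a fibre, then onto a fixed-point set) to the single-fixed-point situation, where the rotation-quotient $\pi$ forces the collapse. The remaining steps are routine: verifying that each retraction and $\pi$ is equivariant and continuous, and that the witnessing sequences survive passage to the successive subdendrites.
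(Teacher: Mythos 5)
Your reduction to the single--fixed--point situation (equivariance of first--point retractions, connectedness of $\mathrm{Fix}$, passage to the fibres $Z$ and $Z'$) looks sound, but the final step contains a genuine gap: the asserted identity $\pi\circ\varphi=\pi$ is false in general. With $\pi$ defined as $\varphi^{-i}$ on $\varphi^{i}(C)$, take $z\in\varphi^{q-1}(C)$: then $\varphi(z)\in C$, so $\pi(\varphi(z))=\varphi(z)$, while $\pi(z)=\varphi^{-(q-1)}(z)$; these agree if and only if $\varphi^{q}(z)=z$. Thus $\pi\circ\varphi=\pi$ is equivalent to $\varphi^{q}$ being the identity on $C$, which is exactly what you cannot assume: points of $C$ are periodic with periods that are multiples of $q$ but need not equal $q$ (for instance $\overline C$ may branch at an interior point and $\varphi^{q}$ may rotate those branches while fixing only the arc from $v$ into $C$). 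So your ``rotation quotient'' removes only the first level of rotation, and the unbounded--period phenomenon you yourself flag as the hard point simply reappears inside $\overline C$, where $\varphi^{q}$ is again a periodic-point homeomorphism whose fixed set may be larger than $\{v\}$; to push the argument through you would have to repeat the whole reduction inside $\overline C$ and supply a reason the recursion terminates (e.g.\ an induction on the period of the putative limit point $y$, which gets divided by the cycle length $q_k\ge 2$ at each stage). That extra inductive argument is missing, so as written the proof of the key claim $\Omega(v,\varphi)=\{v\}$ does not go through.

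For comparison, the paper's proof is far more elementary and avoids homeomorphisms, retractions and the fixed-point analysis altogether: assuming $y\in\Omega(x,f)\setminus\omega(x,f)$ with witnesses $x_n\to x$, $f^{k_n}(x_n)\to y$, it fixes a point $c\in(f(x),y)$, uses $[f(x),f^{k_n}(x_n)]\subseteq f^{k_n}([x,x_n])$ to pull $c$ back to points $z_i\in[x,x_{n_i}]$ chosen so that the $z_i$ are pairwise distinct, and notes that $c\in\mathcal O(z_i,f)$ for infinitely many distinct periodic points $z_i$ --- impossible, since every such $z_i$ must then lie in the finite orbit of the periodic point $c$. Then Theorem \ref{MainTheorem} finishes. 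If you want to salvage your approach you must either prove $\varphi^{q}|_{C}=\mathrm{id}$ (false in general) or add the terminating descent described above; alternatively, adopt the paper's direct pull-back argument, which bypasses the difficulty entirely.
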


\begin{proof}
By Theorem \ref{MainTheorem}, it suffices to show that $\omega(x,f)=\Omega(x,f)$ for each $x\in X$.
Let $x\in X$ be given and suppose that $\Omega(x,f)\setminus\omega(x,f)\neq\emptyset$. If $y\in \Omega(x,f)\setminus\omega(x,f)$, there exist a sequence $(x_n)_{n\in\N}$ in $ X$ and an increasing sequence $(k_n)_{n\in\N}$ in $\N$ such that $\lim_{n\to\infty}x_n=x$ and $\lim_{n\to\infty}f^{k_n}(x_n)=y$.

Since $x\in \mathrm{Per}(f)$, then $\omega(x,f)=\mathcal{O}(x,f)=\{x,f(x),...,f^{m-1}(x)\}$ for some positive integer $m$. Without loss of generality, we may assume that $k_n\equiv 1\ \mathrm{mod}(m)$, for each $n\in\N$. Let $c\in (f(x),y)$. As $X$ is a dendrite, there exists $n_0\in\N$ such that $c\in [f(x),f^{k_n}(x_n)]$, for each $n\geq n_0$. Note that $[f(x),f^{k_{n_0}}(x_{n_0})]\subseteq f^{k_{n_0}}([x,x_{n_0}])$. So, there is $z_0\in [x,x_{n_0}]$ such that $f^{k_{n_0}}(z_0)=c$. In view of $z_0\neq x$, there exists $n_1>n_0$ such that $z_0\notin [x,x_{n_1}]$. Now, as $[f(x),f^{k_{n_1}}(x_{n_1})]\subseteq f^{k_{n_1}}([x,x_{n_1}])$ and therefore, there exists $z_1\in [x,x_{n_1}]$ such that $f^{k_{n_1}}(z_1)=c$. By proceeding inductively we construct an infinite set $\{z_i : i\in\N\}$ such that $c\in \mathcal{O}(z_i,f)$ for each $i\in\N$. This contradicts the fact that $z_i\in\mathrm{Per}(f)$ for each $i\in\N$. Hence $\omega(x,f)=\Omega(x,f)$.
\end{proof}

%The next result is a corollary of Theorems \ref{MainTheorem} and \ref{todos-peri}.
%
%\begin{corollary}
%Let $X$ be a dendrite and $f\colon X\to X$ be an onto map. If $\mathrm{Per}(f)=X$, then $f$ is equicontinuous.
%\end{corollary}

\begin{remark}
Example \ref{EjemploCantor} shows an onto and equicontinuous map $f\colon X\to X$ such that $\mathrm{Per}(f)\neq X$.
\end{remark}

\section{Some examples}

 The following example shows that $\overline{\mathrm{Per}(f)}=\bigcap_{n\in\N}f^n(X)$ cannot be removed from the hypothesis of  (1) in Theorem \ref{MainTheorem}.

\begin{example}\label{Ex0}
	There exist a dendrite $X$ and a homeomorphism $f\colon X\to X$ such that $\omega_f$ is continuous and $\overline{\mathrm{Per}(f)}\neq X$. Furthermore, $\omega(x,f)\neq\Omega(x,f)$ for some $x\in X$, and $f$ is not equicontinuous.
\end{example}

Let $Z=J\cup (\bigcup_{n\in\mathbb{Z}}A_n)$, where:
\begin{itemize}
	\item $J=[-1,1]\times \{0\}$;
	\item $A_n=\{1-1/(n+1)\}\times [0,1/(n+1)]$, if $n\geq 0$;
	\item $A_n=\{-1+1/(1-n)\}\times [0,1/(1-n)]$, if $n<0$.
\end{itemize}
It is easy to see that $Z$ is a dendrite (see Figure 1).

\begin{center}
\begin{tikzpicture}[scale=3]
\draw (-1,0) to (1,0);
\draw (0,0) to (0,1);
\draw (0.5,0) to (0.5,0.5);
\draw (0.75,0) to (0.75,0.25);
\draw (0.833,0) to (0.833,0.1666);
\draw (0.89,0) to (0.89,0.11);
\fill (0.93,0.07) circle (0.2pt);
\fill (0.95,0.05) circle (0.2pt);
\fill (0.97,0.03) circle (0.2pt);

\draw (-0.5,0) to (-0.5,0.5);
\draw (-0.75,0) to (-0.75,0.25);
\draw (-0.833,0) to (-0.833,0.1666);
\draw (-0.89,0) to (-0.89,0.11);
\fill (-0.93,0.07) circle (0.2pt);
\fill (-0.95,0.05) circle (0.2pt);
\fill (-0.97,0.03) circle (0.2pt);

\draw (0.2,1) node {\small{$A_0$}};
\draw (0.65,0.55) node {\small{$A_1$}};
\draw (0.85,0.3) node {\small{$A_2$}};

\draw (-0.3,0.55) node {\small{$A_{-1}$}};
\draw (-0.62,0.3) node {\small{$A_{-2}$}};

\draw (1.2,0) node {\small{$J$}};

\draw (2.5,0) to (2.5,1);
\draw (3,0.5) to (2.5,0) to (2,0.5);
\draw (2.85,0) to (2.5,0) to (2.15,0);
\draw (2.65,-0.15) to (2.5,0) to (2.35,-0.15);

\fill (2.52,-0.04) circle (0.2pt);
\fill (2.54,-0.07) circle (0.2pt);
\fill (2.56,-0.1) circle (0.2pt);

\fill (2.48,-0.04) circle (0.2pt);
\fill (2.46,-0.07) circle (0.2pt);
\fill (2.44,-0.1) circle (0.2pt);

\draw (2.7,1) node {\small{$q(A_0)$}};
\draw (3.2,0.5) node {\small{$q(A_1)$}};
\draw (1.8,0.5) node {\small{$q(A_{-1})$}};
\draw (3.15,0) node {\small{$q(A_{2})$}};
\draw (1.85,0) node {\small{$q(A_{-2})$}};
\draw (2.5,-0.2) node {\tiny{$p$}};

\draw (1,-.5) node {{\bf Figure 1:} Dendrites $Z$ and $X$.};
%\draw (-11.2,0) circle (0.05cm);
%\draw (-11.2,-0.2) node {\tiny{$0$}}; \draw (-9,-0.2) node {\tiny{$1$}};
%\draw (-9,0) to (-11.15,0); \draw (-8,0) to (-8.5,0); \draw (-8.1,-0.1) to (-8,0) to (-8.1,0.1); \draw (-8.25,0.2) node {\tiny{$\varphi$}};
%\draw[dashed] (-7.6,-1) to (-7.6,1);
\end{tikzpicture}
\end{center}

\bigskip

Let $g\colon Z\to Z$ be defined such that:
\begin{enumerate}
	\item $g$ is a homeomorphism;
	\item $g|_{A_n}\colon A_n\to A_{n+1}$ is a homeomorphism, for each $n\in\mathbb{Z}$;
	\item $g(J)=J$.
\end{enumerate}	

Notice that $\mathrm{Fix}(g)=\{(-1,0),(1,0)\}.$ Let $X=Z/J$ and define $f\colon X\to X$ such that $$q\circ g=f\circ q,$$ where $q\colon Z\to X$ is the quotient map. Let $p=q(J)$ (see Figure 1). It is not difficult to check that $X$ is a dendrite, $f$ is a homeomorphism and $\mathrm{Per}(f)=\{p\}$. Furthermore, $\omega_f(x)=\omega(x,f)=\{p\}$, for each $x\in X$. Therefore, $\omega_f$ is continuous. However,  $\Omega(p,f)=X$.

\bigskip

The following result shows that  the previous example is typical, more precisely,  if $f$ is a homeomorphism such that $\omega_f$ is continuous and $f$ is not equicontinuous, then $X\setminus \overline{\mathrm{Per}(f)}$ has infinitely many components and $\omega(x,f)\subseteq \overline{\mathrm{Per}(f)}$, for each $x\in X$.

\begin{theorem}\label{Th0}
	Let $X$ be a dendrite and $f\colon X\to X$ a homeomorphism. If $\omega_f$ is continuous and $X\setminus \overline{\mathrm{Per}(f)}\neq \emptyset$, then $f^{m}(W)\cap W=\emptyset$, for each component $W$ of $X\setminus \overline{\mathrm{Per}(f)}$ and each $m\in\N$. Moreover, $\omega(x,f)\subseteq \overline{\mathrm{Per}(f)}$, for each $x\in X$.
\end{theorem}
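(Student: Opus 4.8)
\emph{Setup and reduction.} The plan is to argue by contradiction: suppose $f^m(W)\cap W\neq\emptyset$ for some component $W$ of $X\setminus\overline{\mathrm{Per}(f)}$ and some $m\in\N$. Since $f$ is a homeomorphism, $f(\mathrm{Per}(f))=\mathrm{Per}(f)$, hence $f^m(\overline{\mathrm{Per}(f)})=\overline{\mathrm{Per}(f)}$, so $f^m$ permutes the components of the open set $X\setminus\overline{\mathrm{Per}(f)}$; thus $f^m(W)=W$. Choosing $m$ minimal with $f^m(W)=W$, the sets $W,f(W),\dots,f^{m-1}(W)$ are pairwise disjoint. Put $g=f^m$, so $g(\overline W)=\overline W$ and, by Theorem \ref{prop00}, $\overline W$ is a nondegenerate dendrite. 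Write $E=\overline W\setminus W$; as $W$ is open, $E$ is closed, and any point of $\overline W$ outside $\overline{\mathrm{Per}(f)}$ would lie in a component of $X\setminus\overline{\mathrm{Per}(f)}$ meeting $W$, hence in $W$ — so $E\subseteq\overline{\mathrm{Per}(f)}$. By Theorem \ref{fixpoint}, $g|_{\overline W}$ has a fixed point, which lies in $\mathrm{Per}(f)\cap\overline W\subseteq E$; in particular $E\neq\emptyset$.

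\emph{Orbits in $W$ hit $E$.} First I would show $\omega(x,g)\not\subseteq W$ for every $x\in W$. If $\omega(x,g)\subseteq W$, let $H$ be the smallest subcontinuum of $X$ containing the compact set $\omega(x,g)$; since $W$ is connected, hence arcwise connected, $H=\bigcup\{[a,b]:a,b\in\omega(x,g)\}\subseteq W$, and since $g(\omega(x,g))=\omega(x,g)$ (Proposition \ref{prop01}) and $g$ is a homeomorphism, $g(H)=H$. Then $g|_H\colon H\to H$ is a map on a dendrite, so by Theorem \ref{fixpoint} it has a fixed point $q\in H\subseteq W\cap\mathrm{Per}(f)$, contradicting $W\cap\overline{\mathrm{Per}(f)}=\emptyset$. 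Hence $\omega(x,g)\cap E\neq\emptyset$ for all $x\in W$.

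\emph{$E$ is a global attractor made of end points.} Continuity of $\omega_f$ gives the key fact: if $e\in\omega(x,f^m)$, pick $k_i\to\infty$ with $f^{mk_i}(x)\to e$; since $\omega(f^{mk_i}(x),f)=\omega(x,f)$ for all $i$, passing to the limit yields $\omega(e,f)=\omega(x,f)$. Now for $x\in W$ take $e\in\omega(x,g)\cap E$: then $\omega(x,f)=\omega(e,f)\subseteq\overline{\mathrm{Per}(f)}$, the inclusion because $e\in\overline{\mathrm{Per}(f)}$ and $\overline{\mathrm{Per}(f)}$ is $f$-invariant. Hence $\omega(x,g)\subseteq\omega(x,f)\subseteq\overline{\mathrm{Per}(f)}$, which is disjoint from $W$, so $\omega(x,g)\subseteq E$ for all $x\in W$; together with $g(E)=E$ this gives $\omega(x,g)\subseteq E$ for \emph{all} $x\in\overline W$. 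Finally, $W$ is connected and dense in $\overline W$, so no point of $E$ can be a cut point of $\overline W$ (such a cut point would separate $\overline W$ leaving the dense connected $W$ entirely on one side); by Theorem \ref{theo00}, $E\subseteq\mathrm{End}(\overline W)$.

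\emph{The crux: a discontinuity of $\omega_f$.} We have reached the configuration: $g|_{\overline W}$ is a homeomorphism of a dendrite with all $\omega$-limit sets contained in the closed set $E\subseteq\mathrm{End}(\overline W)$, with $W=\overline W\setminus E$ open, connected, dense, $g$-invariant and free of periodic (indeed recurrent) points. The hard part — and the place where the homeomorphism hypothesis and Theorem \ref{acosta} are essential — is to produce from this a point where $\omega_f$ fails to be continuous. I would aim to locate a periodic point $p\in E$ that is \emph{not} attracting from within $W$: a periodic end point $p$ and points $x_n\in W$ with $x_n\to p$ for which $\omega(x_n,g)$ stays a fixed positive distance from the finite orbit $\mathcal O(p,f)$; then $\omega(x_n,f)\supseteq\omega(x_n,g)$ also stays away from $\mathcal O(p,f)=\omega(p,f)$, contradicting continuity of $\omega_f$ at $p$. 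The existence of such a $p$ should be forced because $E$ is a \emph{proper} closed global attractor with $W$ connected: if \emph{every} periodic point of $E$ attracted its $W$-side, then pushing forward a small arc $[p,a]$ with $(p,a)\subseteq W$ (available since $E\subseteq\mathrm{End}(\overline W)$) by a suitable power of $g$ produces a nondegenerate subarc of $W$ invariant under that power with both endpoints fixed, placing a point of $\mathrm{Per}(f)$ inside $W$ — impossible. Turning this dichotomy into the required sequence $x_n$, and ruling out the possibility that $\omega(x_n,g)$ is a Cantor set hugging $E$ rather than a periodic orbit (here one invokes Theorem \ref{acosta} applied to $g|_{\overline W}$, together with the minimality statement $\omega(y,f)=\omega(x,f)$ for $y\in\omega(x,f)$ that continuity of $\omega_f$ provides), is the main technical obstacle of the proof.

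\emph{The ``moreover''.} This is a short consequence of the first part. If $\omega(x_0,f)\not\subseteq\overline{\mathrm{Per}(f)}$, pick $z\in\omega(x_0,f)$ lying in a component $W$ of $X\setminus\overline{\mathrm{Per}(f)}$. As $W$ is open and $f^{n_i}(x_0)\to z$ for some $n_i\to\infty$, we may take $n$ with $w:=f^{n}(x_0)\in W$; then $z\in\omega(w,f)=\omega(x_0,f)$, so $f^{k_j}(w)\to z$ for some $k_j\to\infty$. But $f^{k_j}(w)\in f^{k_j}(W)$, which is disjoint from $W$ by the first part, contradicting $z\in W$ and $W$ open. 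Hence $\omega(x,f)\subseteq\overline{\mathrm{Per}(f)}$ for every $x\in X$.
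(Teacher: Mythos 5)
Your preparatory work is largely correct: the reduction to $f^m(W)=W$ with $m$ minimal, the inclusion $E=\overline W\setminus W\subseteq\overline{\mathrm{Per}(f)}$, the fixed point of $g=f^m$ on the dendrite $\overline W$, the argument that $\omega(x,g)$ must meet $E$ (via the smallest invariant subcontinuum and Theorem \ref{fixpoint}), the use of continuity of $\omega_f$ to get $\omega(e,f)=\omega(x,f)$ for $e\in\omega(x,f^m)$, and $E\subseteq\mathrm{End}(\overline W)$; and your ``moreover'' paragraph is essentially the paper's own argument. But there is a genuine gap at the decisive step: the paragraph you label ``the crux'' is not a proof. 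Producing the contradiction with continuity of $\omega_f$ --- locating a periodic $p\in E$ that does not attract its $W$-side, extracting from this a sequence $x_n\to p$ with $\omega(x_n,g)$ bounded away from $\mathcal O(p,f)$, and excluding the possibility that these limit sets are Cantor sets accumulating on $E$ --- is exactly the content of the theorem, and none of it is carried out; you say yourself it ``is the main technical obstacle of the proof.'' The sketched dichotomy (``if every periodic point of $E$ attracted its $W$-side, pushing forward a small arc $[p,a]$ produces an invariant subarc of $W$ with both endpoints fixed'') is not justified and it is far from clear it can be made to work: an attracting periodic end point does not obviously yield fixed endpoints inside $W$. So the contradiction on which the whole first assertion rests is missing.

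For comparison, the paper closes this step by a different and much more direct route, and it is worth seeing why it avoids your obstacle. By \cite[Lemma 3.5]{Acosta}, the homeomorphism $g=f^m|_{\overline W}$ has at least \emph{two} fixed points $x_0\neq y_0$, which (as you observed) lie in $\mathrm{End}(\overline W)$. The arc $\alpha=[x_0,y_0]\subseteq\overline W$ satisfies $g(\alpha)=\alpha$ and contains no fixed point of $g$ in its interior, so $g|_\alpha$ moves every interior point monotonically toward one endpoint; say $\omega(x,g)=\{x_0\}$, hence $\omega(x,f)=\omega(x_0,f)$, for every $x\in\alpha\setminus\{x_0,y_0\}$. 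Letting such $x$ tend to $y_0$, continuity of $\omega_f$ forces $\omega(y_0,f)=\omega(x_0,f)$, and this is refuted directly: if $m=1$ the two fixed points of $f$ have distinct singleton limit sets, while if $m>1$ (with $m$ minimal) the equality forces $y_0\in\mathcal O(x_0,f)$, and the arcs $\alpha,f(\alpha),\dots,f^{m-1}(\alpha)$, meeting only in points of $\mathcal O(x_0,f)$, concatenate into a simple closed curve inside the dendrite $X$ --- impossible. Note that the paper's proof never needs Theorem \ref{acosta} (periodic orbit or Cantor set); the only input from \cite{Acosta} is the two-fixed-point lemma, which is precisely the ingredient your sketch lacks and which dissolves the case analysis you were trying to set up.
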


\begin{proof}
	Let $W$ be a component of $X\setminus \overline{\mathrm{Per}(f)}$. Suppose that there exists a positive integer $m$ such that $f^m(W)\cap W\neq\emptyset$. As $f$ is a homeomorphism, we have $f(\overline{\mathrm{Per}(f)})=\overline{\mathrm{Per}(f)}$ and $f^{i}(W)$ is a component of $X\setminus \overline{\mathrm{Per}(f)}$, for each $i\in\N$.  Hence, $f^m(W)=W$.
	
%\begin{equation}\label{eq0}
%f^{m}(W)=W, \text{ and }f^{i}(W)\cap W=\emptyset, \text{ for each }i<m.
%\end{equation}
 Let $g=f^m|_{\overline{W}}\colon \overline{W}\to \overline{W}$. Notice that $g$ is a homeomorphism and $\overline{W}$ is a dendrite, by Theorem \ref{prop00}. So, $\mathrm{Fix}(g)\neq\emptyset$, by Theorem \ref{fixpoint}. Since $W\cap \mathrm{Per}(f)=\emptyset$, then $\mathrm{Fix}(g)\subseteq \overline{W}\setminus W$. Hence, by Theorem \ref{theo00},
	\begin{equation}\label{eq1}
	\mathrm{Fix}(g)\subseteq \mathrm{End}(\overline{W}).
	\end{equation}
	By \cite[Lemma 3.5]{Acosta}, $|\mathrm{Fix}(g)|\geq 2$. Let $x_0$ and $y_0$ be different points of $\mathrm{Fix}(g)\cap \mathrm{End}(\overline{W})$ and $\alpha$ the unique arc in $\overline{W}$ joining $x_0$ and $y_0$. Since $g$ is a homeomorphism and $\overline{W}$ is a dendrite, we have $g(\alpha)=\alpha$. Furthermore, $\alpha\setminus\{x_0,y_0\}\cap \mathrm{Fix}(g)=\emptyset$, by equation \eqref{eq1}. Notice that, for each $x\in\alpha\setminus\{x_0,y_0\}$, we have either $\lim_{i\to\infty}g^i(x)=x_0$ or $\lim_{i\to\infty}g^i(x)=y_0$.  Being $\alpha$ an arc and as the only fix points of $g$ belonging to $\alpha$ are $x_0$ and $y_0$, then without loss of generality, we may suppose that $\lim_{i\to\infty}g^i(x)=x_0$, for each $x\in\alpha\setminus\{x_0,y_0\}$, i.e., $\omega(x,g)=\{x_0\}$ for each $x\in \alpha\setminus\{x_0,y_0\}$ . Thus, $\omega(x,f)=\omega(x_0,f)$, for each $x\in \alpha\setminus\{x_0,y_0\}$. Since $\omega_f$ is continuous, $\omega(y_0,f)=\omega(x_0,f)$. We  will show  that  this  is not possible.
	
	If $m=1$, then $f(W)=W$ and $x_0,y_0\in \mathrm{Fix}(f)$, that is, $\omega(y_0,f)\neq\omega(x_0,f)$. Assume that $m>1$ and $f^l(W)\cap W=\emptyset$, for each $l<m$.
	 In view of $x_0\in \mathrm{Per}(f)$, we have $\omega(x_0,f)=\mathcal{O}(x_0,f)$ and thus $y_0\in \mathcal{O}(x_0,f)$. Hence there is $l_1\in\{1,....,m-1\}$ such that $y_0=f^{l_1}(x_0)$. Notice that $\alpha, f(\alpha),...,f^{m-1}(\alpha)$ are different arcs such that,
	 \begin{center}
	 if $f^{i}(\alpha)\cap f^{j}(\alpha)\neq\emptyset$ for $i\neq j$, then $|f^{i}(\alpha)\cap f^{j}(\alpha)|= 1$ and $f^{i}(\alpha)\cap f^{j}(\alpha)\subseteq \mathcal{O}(x_0,f)$.
	 \end{center}
	 Then, $\alpha\cap f^{l_1}(\alpha)=\{f^{l_1}(x_0)\}$. It is not difficult to see that there exist $l_1,l_2,...,l_s$ such that $f^{l_i}(\alpha)\cap f^{l_{i+1}}(\alpha)=\{f^{l_{i}}(x_0)\}$ and $\alpha\cap f^{l_{s}}(\alpha)=\{x_0\}$. So, $\alpha\cup f^{l_1}(\alpha)\cup...\cup f^{l_{s}}(\alpha)$ is a simple closed curve which is impossible since $X$ is a dendrite. Therefore, $f^{m}(W)\cap W=\emptyset$ for each $m\in\N$.
	
	 Finally, suppose that $\omega(x,f)\cap (X\setminus\overline{\mathrm{Per}(f)})\neq\emptyset$. Let $y\in \omega(x,f)\cap (X\setminus\overline{\mathrm{Per}(f)})$ be fixed and consider the component $W$ of $X\setminus\overline{\mathrm{Per}(f)}$ such that $y\in W$. Since $W$ is open and $y\in \omega(x,f)$, then there exist two different integers $l,s\in\N$ such that $f^{l}(x), f^s(x)\in W$. Assume that $s-l>0$.  Since $f^{s-l}(f^l(x))=f^s(x)$, then  $f^{s-l}(W)\cap W\neq\emptyset$, contradicting the first part of the theorem. Therefore, $\omega(x,f)\subseteq \overline{\mathrm{Per}(f)}$, for each $x\in X$.
\end{proof}

\bigskip

For our next example we need to recall the definition of  \textit{the adding machine} or \textit{odometer} $h\colon \{0,1\}^{\N}\to \{0,1\}^{\N}$. If $(a_i)_i$ is the constant sequence equal to 1, then $h((a_i)_{i})$ is the constant sequence equal to 0. Otherwise, let  $h((a_i)_{i\in\N})$ be equal to $(b_i)_{i\in\N}$ where $b_i$ is defined as follows: if $k=\min\{n\in\N : a_n=0\}$, then
\begin{gather*}
b_i=\begin{cases} 0 &\text{ if } i<k;\\ 1 &\text{ if }i=k;\\ a_i &\text{ if }i>k.\end{cases}
\end{gather*}
It is well known that $h$ is a homemorphism such that $\mathrm{Per}(h)=\emptyset$, and $\omega(x,h)=\{0,1\}^{\N}$ for each $x\in \{0,1\}^{\N}$, see for instance \cite[p.678]{Banks}.

Observe that if $w\in \{0,1\}^n$ and $[w]=\{x\in \{0,1\}^{\N} : x_i=w_i, \text{ for each }i\in\{1,...,n\}\}$, then $\{0,1\}^{\N}=\bigcup_{j=1}^{2^n}h^j([w])$, where $h^j([w])\cap h^i([w])=\emptyset$ whenever $i,j\in\{1,...,2^n\}$ and $i\neq j$. Thus, $h$ is equicontinuous.

A continuum $X$ is said to be \textit{hereditarily unicoherent} provided that $A\cap B$ is connected for any pair of subcontinua $A$ and $B$ of $X$. A \textit{dendroid} is an arcwise connected, hereditarily unicoherent continuum. Every dendrite is a dendroid. A \textit{fan} is a dendroid with only one ramification point.

Our next example shows that Theorem \ref{MainTheorem} is not true for fans; particularly, it is not valid for dendroids.

\begin{example}
       There exist a fan $X$ and a homeomorphism $f\colon X\to X$ such that $f$ is equicontinuous, and $|\mathrm{Per}(f)|=1$.
\end{example}

Let $X=(\{0,1\}^{\N}\times [0,1])/(\{0,1\}^{\N}\times \{1\})$ be the Cantor fan. We denote the unique non-degenerate class $\{0,1\}^{\N}\times \{1\}$ of $X$, by $v$. Let $h$ be the odometer map. We define $f\colon X\to X$ by: $$f(\chi)=\begin{cases} v &\text{ if }\chi=v;\\ (h((a_i)_{i\in\N}),t) &\text{ if }\chi=((a_i)_{i\in\N},t), t<1.\end{cases}$$ Since $h$ is equicontinuous, $f$ is equicontinuous. Also, it is clear that $\mathrm{Per}(f)=\{v\}$.

\bigskip

The following example shows an equicontinuous map $f\colon X\to X$, where $X$ is a dendrite, such that $\mathrm{Per}(f)\neq \bigcap_{n\in\N}f^n(X)$.

\begin{example}\label{EjemploCantor}
There exist a dendrite $X$ and a homeomorphism $f\colon X\to X$ such that $f$ is equicontinuous, and $\mathrm{Per}(f)\neq X$.
\end{example}

Let $Z=\{0,1\}^{\N}\times [0,1]$ and let $g\colon Z\to Z$ be defined by: $$g((a_i)_{i\in\N},t)=(h((a_i)_{i\in\N}),t),$$ for each $((a_i)_{i\in\N},t)\in Z$, where $h$ is the adding machine.

For each $(x,t), (y,s)\in Z$, we define $(x,t)\sim (y,s)$ if and only if $s=t$ and:
\begin{itemize}
\item if $t\in [\frac{1}{2},1]$, then $(x,t)\sim (y,t)$, for each $x,y\in \{0,1\}^{\N}$;
\item if $t\in [\frac{1}{n+2},\frac{1}{n+1})$ for some $n\in\N$, then $(x,t)\sim (y,t)$, whenever exists $w\in \{0,1\}^n$ such that $\{x,y\}\subseteq [w]$.
\end{itemize}
It is not difficult to see that $\sim$ generates an upper semicontinuous decomposition on $Z$. Thus, $X=Z/\sim$ is a continuum \cite[Theorem 3.10]{Nadler}. The dendrite $X$ is described in \cite[Example 10.39, Figure 10.39]{Nadler}. Let $q\colon Z\to X$ be the quotient map. Since $\{0,1\}^{\N}=\bigcup_{j=1}^{2^n}h^j([w])$, where $h^j([w])\cap h^i([w])=\emptyset$ whenever $i,j\in\{1,...,2^n\}$ and $i\neq j$, the correspondence \linebreak$f\colon X\to X$ defined such that 
\begin{gather*}
q\circ g=f\circ q,
\end{gather*}
is well defined and it is continuous. Observe that $f$ is equicontinuous and $\mathrm{Per}(f)=X\setminus (\{0,1\}^{\N}\times\{0\})$.

\vspace{0.5cm}

\textbf{Acknowledgements:} This research was partially supported by the grant C-2018-05 of VIE-UIS.

\end{document}